\DeclareMathSymbol{\R}{\mathalpha}{AMSb}{"52}
\newcommand{\N}{\mathbb{N}}
\newcommand{\F}{\mathcal{F}}
\newcommand{\cF}{\mathcal{F}}
\newcommand{\cB}{\mathcal{B}}
\newcommand{\E}{\mathbb{E}}
\renewcommand{\P}{\mathbb{P}}
\newcommand{\bR}{\mathbb{R}}
\theoremstyle{plain}
\numberwithin{equation}{section}
\begin{document}

\author{Istv\'an Gy\"ongy, David \v{S}i\v{s}ka}
\authorrunning{I.~Gy\"ongy, D.~\v{S}i\v{s}ka}
\institute{
Istv\'an Gy\"ongy 
\at School of Mathematics, University of Edinburgh\\
\email{i.gyongy@ed.ac.uk}
\and
David \v{S}i\v{s}ka
\at School of Mathematics, University of Edinburgh\\
\email{d.siska@ed.ac.uk}
}

\titlerunning{It\^o Formula in Intersection of Banach Spaces}
\title{It\^o Formula for Processes Taking Values in Intersection of Finitely Many Banach Spaces\thanks{This an electronic reprint of the article published in {\em Stochastics and Partial Differential Equations: Analysis and Computations} available
at \url{http://dx.doi.org/10.1007/s40072-017-0093-6}.
It may differ from the published version in typographical detail.}
}

\date{Submitted for publication: 6th September 2016, First Online: 17th March 2017.\\
\textcircled{c} The Author(s) 2017. This article is an open access publication.}

\maketitle

\begin{abstract}
Motivated by applications to SPDEs we extend the It\^o formula 
for the square of the norm of a semimartingale $y(t)$
from \cite{gyongy:krylov:on:stochastic:II}
to the case 
\begin{equation*}
\sum_{i=1}^m \int_{(0,t]} v_i^{\ast}(s)\,dA(s) + h(t)=:y(t)\in V
\quad \text{$dA\times \P$-a.e.}, 
\end{equation*}
where  $A$ is an 
increasing right-continuous adapted process, $v_i^{\ast}$ is a 
progressively measurable process with values in $V_i^{\ast}$, the dual of a Banach space $V_i$, $h$ is a cadlag 
martingale with values in a Hilbert space $H$, identified with its dual $H^{\ast}$,  
and $V:=V_1\cap  V_2 \cap \ldots \cap V_m$  is continuously and densely 
embedded in $H$.  

The formula is proved under the condition that 
$\|y\|_{V_i}^{p_i}$ and $\|v_i^\ast\|_{V_i^\ast}^{q_i}$ are 
almost surely locally integrable with respect to $dA$ for some conjugate
exponents $p_i, q_i$. 
This condition is essentially weaker than the one which would arise 
in application of the results in~\cite{gyongy:krylov:on:stochastic:II}
to the semimartingale above. 
 \end{abstract}

\keywords{Stochastic evolution equations, Stochastic partial differential equations, It\^o's formula, Energy equality}

\section{Introduction}
It\^o formula for the square of the norm is an essential tool 
in the study of stochastic evolution equations 
of the type 
\begin{equation}                                                      \label{evolution}
dv(t)=\mathbb A(t,v(t))\,dt+\sum_k\mathbb B_k(t,v(t))\,dW^k(t),
\end{equation}
where $(W^k)_{k=1}^{\infty}$ is a sequence of independent 
Wiener processes, and $\mathbb A(t,\cdot)$ and $\mathbb B_k(t,\cdot)$ 
are  (possibly random nonlinear) operators on a separable 
real Banach space $V$, with values  
in a Banach space $V'$ and a Hilbert space $H$ respectively,
such that $V\hookrightarrow H\hookrightarrow V'$ with continuous and 
dense embeddings.  
We assume there is a constant $K$ 
such that $(v,h)\leq K\|v\|_V\|h\|_{V'}$ for all $v\in V$ 
and $h\in H$. This means that for the linear mapping $\Psi:H\to H^{\ast}$, which identifies 
$H$ with its dual $H^{\ast}$ via the inner product in $H$, 
we have $\|\Psi(h)\|_{V^{\ast}}\leq K\|h\|_{V'}$. 
Therefore, since $H$ is dense in $V'$, $\Psi$ 
can be extended to a continuous mapping from $V'$ into $V^{\ast}$, 
the dual of $V$. 
 It is assumed that this extension 
is one-to-one from $V'$ into $V^{\ast}$. 
Thus an initial value problem for 
equation \eqref{evolution}
can be viewed 
as 
\begin{equation}                                    \label{y}
v(t)=\int_0^tv^{\ast}(s)\,ds+h(t)=:y(t)
\end{equation}
with the $V^{\ast}$-valued process $v^{\ast}(t):=\mathbb A(t,v(t))$ and 
$H\equiv H^{\ast}$-valued process 
$$
h(t):=h_0+\sum_k\int_0^t\mathbb B^k(s,v(s))\,dW^k(s),
$$
where $h_0$ is a given initial value and the equality \eqref{y} 
in $V^{\ast}$  
is required $ dt\times\P$ 
almost everywhere. 
In the special case  $B_k=0$ for every $k$, and nonrandom $h_0$ and $A$,  i.e., 
in the case 
$$
v(t)=h_0+\int_0^tv^{\ast}(s)\,ds, \quad dt\text{-a.e.},
$$
it is well-known  that when $v\in L_p([0,T], V)$,  
$v^{\ast}\in L_q([0,T], V^{\ast})$ for $T>0$ and conjugate exponents 
$p$ and $q$, then there is $u\in C([0,T],H)$  such that $u=v$ for $dt$-almost 
all $t\in[0,T]$ and the ``energy equality" 
$$
|u(t)|_H^2=|h_0|_H^2+2\int_0^t\langle v^{\ast}(s),v(s)\rangle\,ds
$$
holds for all $t\in[0,T]$, where $\langle\cdot,\cdot\rangle$ denotes the 
duality pairing of $V^{\ast}$ and $V$. 
This formula is used in proofs of existence and uniqueness theorems for PDEs, 
see e.g., \cite{Evans} and \cite{Lions}. A generalisation of it, 
a ``stochastic energy equality", i.e., 
an It\^o formula for the square of the $H$-norm of $y$, was first presented in Pardoux \cite{pardoux:thesis},   
and was used to obtain existence and uniqueness theorems for SPDEs.  
The proof of it in \cite{pardoux:thesis} was not separated 
from the theory of SPDEs developed there. A proof, not bound
to the theory of SPDEs,   
was given in Krylov and Rozovskii \cite{krylov:rozovskii:stochastic},   
and then this stochastic energy equality 
was generalised in Gy\"ongy and Krylov~\cite{gyongy:krylov:on:stochastic:II} 
to $V^{\ast}$-valued semimartingales $y$ 
of the form 
\begin{equation}
\label{eq y}
y(t)=\int_{(0,t]}v^{\ast}(s)\,dA(s)+h(t), 
\end{equation}
where $A$ is an adapted nondecreasing cadlag process and 
$h$ is an $H$-valued cadlag martingale. This generalisation is used 
in Gy\"ongy~\cite{gyongy} to extend the theory of SPDEs developed in \cite{pardoux:thesis} and 
\cite{krylov:rozovskii:stochastic} 
to SPDEs driven by random orthogonal measures and L\'evy martingales,  
written in the form 
\begin{equation}                                                 \label{SPDE}
dv(t)=\mathbb A(t,v(t))\,dA(t)+\mathbb B(t,v(t))\,dM(t)
\end{equation}
with cadlag (quasi left-continuous) martingales $M$ with values in a Hilbert space. 

In the present paper we are interested in stochastic energy equalities 
which can be applied to SPDEs \eqref{SPDE}  when 
$\mathbb A$ is of the form $\mathbb A=\mathbb A_1+\mathbb A_2+\cdots+\mathbb A_m$ 
and the operators $\mathbb A_i$ have different analytic and growth properties.   
This means, 
$$
\mathbb A_i(t,\cdot):V_i\to V_i^{\prime}\quad i=1,2,\ldots,m
$$ 
for some Banach spaces $V_i$ and $V'_i$, 
such that with a constant $R$ and a process $g$, locally integrable
with respect to $dA$, one has for all $t$
$$
\|\mathbb A_i(t,w)\|_{V'_i} \leq |g_t|^{1/q_i} + R\|w\|_{V_i}^{p_i-1}
$$
for all $w\in V$, $q_i=p_i/(p_i-1)$ with (possibly) different exponents $p_i\geq1$, 
which for $p_i=1$ means that $\|\mathbb A_i(t,w)\|_{V'_i}$ is bounded by a constant. 
 
In the special case when $A(t)=t$ and $M$ is a Wiener process 
the above situation was considered in \cite{pardoux:thesis}, and a related stochastic energy 
equality was also presented there. 
Our main result, Theorem \ref{thm:1} generalises the results on stochastic energy equalities 
from \cite{pardoux:thesis} and \cite{gyongy:krylov:on:stochastic:II}. We prove it by adapting 
the method of the proof of the main theorem in \cite{gyongy:krylov:on:stochastic:II}. 

In the present paper we consider a semimartingale $y$ of the form~\eqref{eq y} 
such that $dA \times \P$-almost everywhere $y$ takes values in 
$V=V_1\cap \ldots \cap V_m$, where $V_i$ are Banach spaces (over $\mathbb{R}$) 
such that $V$ with the norm $\|\cdot\| := \sum_{i=1}^m \|\cdot\|_{V_i}$
is continuously and densely embedded in $H$.
The process $v^\ast$ in~\eqref{eq y} is of the form $v^\ast = \sum_{i=1}^m v_i^\ast$,
where $v_i^\ast$ are $V_i^\ast$-valued progressively measurable processes.
We prove that $y$ is almost surely cadlag as a process with values in $H$ 
and for $|y|_H^2$ an It\^o formula holds under the assumption that
$\|y\|_{V_i}^{p_i}$ and $\|v_i^\ast\|_{V_i^\ast}^{q_i}$ are 
almost surely locally integrable with respect to $dA$ for some conjugate
exponents $p_i, q_i$. 
See Section~\ref{sec:main} for precise formulation of the main theorem.
To apply the result of~\cite{gyongy:krylov:on:stochastic:II} to $y$ 
given by~\eqref{eq y}, one needs the local integrability (with respect to $dA$) 
of 
\[
\|y\|_V\|v^\ast\|_{V^\ast} = \left(\|y\|_{V_1} + \cdots + \|y\|_{V_m} \right)\|v_1^\ast + \cdots + v_m^\ast\|_{V^\ast},
\]
which, in general, is not satisfied under our assumptions.
See Remark~\ref{rem integrability} and Example~\ref{ex spde}. 

We note that in the context of stochastic evolution 
equations it is possible to prove It\^o formulae for more general functions
(satisfying appropriate differentiability assumptions), 
see again Pardoux~\cite{pardoux:thesis}, 
Krylov~\cite{krylov:ito_fla}, \cite{K2010}, \cite{K},
Da Prato, Jentzen and R\"ockner~\cite{daprato:jentzen:rockner},
as well as Dareiotis and Gy\"ongy~\cite{dareiotis:gyongy}.
The It\^o formula for the square of the norm is 
used in particular to establish a priori estimates as well as 
uniqueness and existence of solutions of stochastic evolution equations.
The more general It\^o formula can then be used to study finer properties 
of solutions of stochastic evolution equations,
for example the maximum principle.  

For general theory of SPDEs in the variational setting we refer the reader 
to Krylov and Rozovskii~\cite{krylov:rozovskii:stochastic},
Pr\'ev\^ot and R\"ockner~\cite{PR} and Rozovskii~\cite{R}.
\section{Main Results}
\label{sec:main}

For $i=1,\ldots,m$ let $(V_i,\|\cdot\|_{V_i})$ be 
real Banach 
spaces with duals $(V_i^*,\|\cdot\|_{V_i^*})$.  
Let $V$ denote the vector space $V_1\cap \cdots \cap V_m$ 
with the norm $\|\cdot\| := \|\cdot\|_{V_1} + \cdots + \|\cdot\|_{V_m}$. 
Then clearly, $V$ is a Banach space. Assume that it is separable and 
is continuously and densely 
embedded in a Hilbert space $(H, |\cdot|)$, which is identified with its dual $H^{\ast}$ 
by the help of the inner product $(\cdot,\cdot)$ in $H$. Thus we have 
$$
V\hookrightarrow H\equiv H^{\ast}\hookrightarrow V^{\ast}, 
$$
where $H^{\ast}\hookrightarrow V^{\ast}$ is the adjoint of the embedding 
$V\hookrightarrow H$. 
We use the notation 
$\langle\cdot, \cdot\rangle$ for the duality pairing between $V$ and $V^{\ast}$. 
Note that if $v^{\ast}\in V_i^{\ast}$ for some $i$, then its restriction to $V$ belongs 
to $V^{\ast}$ and $|\langle v^{\ast},v\rangle|\leq \|v^{\ast}\|_{V^{\ast}_i}\|v\|_{V_i}$ 
for all $v\in V$. Note also that $\langle v^{\ast},v\rangle=(h,v)$ for  
for all $v\in V$ when $v^{\ast}=h\in H$. 

A complete probability space $(\Omega, \F, \P)$ 
together with an increasing family of $\sigma$-algebras 
$(\F_t)_{t\geq 0}$, $\F_t \subset \F$ will be used throughout the paper.
Moreover it is assumed that the usual conditions are satisfied: 
$\bigcap_{s> t} \F_s = \F_t$ and $\F_0$ contains all subsets of $\P$-null sets of $\F$. 
We use the notation $\cB(\bR_+)$ for the $\sigma$-algebra of Borel subsets of 
$\bR_+=[0,\infty)$, and for a real-valued  increasing $\cB(\bR_+)\otimes\cF$-measurable 
process $(A(t))_{t\geq0}$ the notation $dA\times\P$ stands for the measure  
defined on $\cB(\bR_+)\otimes\cF$ 
by
$$
(dA\times\P)(F)=\E\int_0^{\infty}{\bf 1}_F\,dA(t), \quad F\in \cB(\bR_+)\otimes\cF. 
$$

Let $h=(h(t))_{t\geq0}$ be an $H$-valued locally square integrable martingale that is cadlag 
(continuous from the right with left-hand limits) in the strong 
topology on $H$. 
Its quadratic variation process is denoted by  $[h]$, and $\langle h\rangle$ denotes 
the unique predictable process starting from zero such that $|h|^2-\langle h\rangle$ is a 
local martingale. 
Furthermore let $A$ be a real-valued nondecreasing adapted 
cadlag process starting from zero.
Finally let $v=(v(t))_{t\geq0}$ be a $V$-valued progressively measurable process 
and for $i=1,\ldots,m$ let $v^{\ast}_i=(v^{\ast}_i)_{t\geq0}$ be $V_i^*$-valued processes 
such that $\langle\varphi,v_i^{\ast}\rangle$ are progressively measurable for any
$\varphi \in V$. Notice that $v$ is also progressively measurable as a process 
with values in $\bar V_i$, the closure in $V_i$-norm of the linear hull of  
$
\{v(t):t\geq0, \omega\in\Omega\}.
$

Let there be $p_i \in [1,\infty)$ and $q_i=p_i/(p_i-1)\in(1,\infty]$, 
where, as usual, $1/0:=\infty$.
Assume that for each $i=1,2,\ldots,m$ and $T>0$ 
\begin{equation}                                                                \label{assumption main}
\int_0^T\|v(t)\|_{V_i}^{p_i}\,dA(t)<\infty,\quad 
\left(\int_0^T\eta_i^{q_i}(t)\,dA(t)\right)^{1/q_i}<\infty, 
\end{equation}
for some progressively measurable process $\eta_i$ such that 
$\|v_i^{\ast}\|_{V_i^{\ast}}\leq \eta_i$ for $dA\times \P$-almost everywhere, 
where  for $q_i=\infty$  the second expression means  
$$
\text{$dA$-ess}\,\sup_{t\leq T}\eta_i(t), 
$$
the essential supremum (with respect to $dA$) of $\eta_i$ over $[0,T]$. 

The following theorem is the main result of this paper.

\begin{theorem}                                                                \label{thm:1}
Let $\tau$ be a stopping time. 
Suppose that for all $\varphi \in V$ and for $dA\times \P$ 
almost all $(\omega, t)$ such that $t\in (0,\tau(\omega))$ we have
\begin{equation}
\label{eq:1}
(v(t),\varphi)= \sum_{i=1}^m \int_{(0,t]}\langle v^{\ast}_i(s),\varphi\rangle\, dA(s) 
+ (h(t),\varphi).
\end{equation}
Then there is $\tilde{\Omega} \subset \Omega$ 
with $\P(\tilde{\Omega}) = 1$ and
an $H$-valued cadlag process $\tilde v$ such that 
the following statements hold.
\begin{enumerate}
\item[(i)] For $dA\times \P$ almost all $(t, \omega)$ 
satisfying $t \in (0,\tau(\omega))$ we have $\tilde v=v$.
\item[(ii)] For all $\omega \in \tilde{\Omega}$ and $t\in[0,\tau(\omega))$ we have
\begin{equation}
                                                                                                                 \label{eq:2}
(\tilde v(t),\varphi)
 = \sum_{i=1}^m \int_{(0,t]}\langle v^{\ast}_i(s),\varphi\rangle\,dA(s) 
 + h(t)\varphi\quad \textrm{for all }\, \varphi \in V.
\end{equation}
\item[(iii)] For all $\omega \in \tilde{\Omega}$ and $t\in[0, \tau(\omega))$ 
\begin{equation}
\label{eq:3}
\begin{split}
|\tilde v(t)|^2  = 
& |h(0)|^2 + 2\sum_{i=1}^m \int_{(0,t]}\langle v^{\ast}_i(s),v(s)\rangle\,dA(s) 
+ 2\int_{(0,t]}( \tilde v(s-)\,dh(s)) \\ 
& - \int_{(0,t]} |v^{\ast}(s)|^2 \Delta A(s) dA(s) + [h]_t, 
\end{split}
\end{equation}
where $v^{\ast}(t):=\sum_{i=1}^m v^{\ast}_i(t)\in H$ 
for $\Delta A(t) > 0$. 
\end{enumerate}
\end{theorem}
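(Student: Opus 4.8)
The plan is to follow the strategy of Gy\"ongy and Krylov~\cite{gyongy:krylov:on:stochastic:II}, first establishing the result under strong boundedness/integrability hypotheses and a localisation reduction, then removing them by approximation. \emph{Step 1 (localisation).} By replacing $A$ with $A_{\cdot\wedge\tau}$, $h$ with $h_{\cdot\wedge\tau}$, and the $v_i^\ast$ with $v_i^\ast\one{1}_{(0,\tau]}$, and by stopping at the times $\tau_n := \inf\{t : A(t) + \sum_i \int_0^t(\|v\|_{V_i}^{p_i}+\eta_i^{q_i})\,dA + [h]_t \ge n\}$ (with the obvious modification for $q_i=\infty$), we may assume $A$ is bounded, $h$ is a genuine square-integrable martingale, and the quantities in~\eqref{assumption main} are bounded by a constant uniformly in $\omega$; the passage from the stopped statements to the statement for $t<\tau$ is the routine patching argument, using that the constructed $\tilde v$ agrees on overlapping intervals. \emph{Step 2 (smoothing in time).} For $\varepsilon>0$ regularise by convolution/averaging in the $dA$-integral: set $v^\ast_{i,\varepsilon}(t):=\varepsilon^{-1}\int_{(t-\varepsilon,t]}v_i^\ast(s)\,dA(s)$ (a $V_i^\ast$-valued, hence $V^\ast$-valued, process), $h_\varepsilon$ an analogous smoothing of $h$, and $y_\varepsilon(t) := \sum_i\int_{(0,t]}v^\ast_{i,\varepsilon}\,ds + h_\varepsilon(t)$. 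For the smoothed objects $y_\varepsilon$ is an absolutely continuous (in $t$) $V^\ast$-valued process whose derivative lies in $V^\ast$, so the finite-dimensional/Hilbert-space It\^o formula, or directly the deterministic chain rule combined with the classical It\^o formula for $|h_\varepsilon|^2$, yields an energy identity for $|y_\varepsilon(t)|^2$.

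\emph{Step 3 (passing to the limit).} The heart of the proof is letting $\varepsilon\downarrow 0$. Here one uses: (a) $v^\ast_{i,\varepsilon}\to v_i^\ast$ in $L_{q_i}(dA\times\P; V_i^\ast)$ (or weak-$\ast$ if $q_i=\infty$) and the corresponding $V_i$-valued smoothings of $v$ converge in $L_{p_i}(dA\times\P;V_i)$, so that by the conjugate-exponent pairing $\sum_i\int\langle v^\ast_{i,\varepsilon},v_\varepsilon\rangle\,dA \to \sum_i\int\langle v_i^\ast,v\rangle\,dA$; (b) the martingale term converges because $h_\varepsilon\to h$ in the square-integrable martingale norm, together with a stochastic-integral convergence $\int \tilde v_\varepsilon(s-)\,dh_\varepsilon \to \int\tilde v(s-)\,dh$; (c) the jump correction term $-\int|v^\ast|^2\Delta A\,dA$ emerges precisely from the discrepancy between the time-smoothed integral and the original one at atoms of $A$ — this is exactly the phenomenon already handled in~\cite{gyongy:krylov:on:stochastic:II} and must be tracked carefully, showing $\int |v^\ast_\varepsilon|^2\,ds$-type terms produce $\sum|v^\ast(t)|^2\Delta A(t)$ in the limit (note $v^\ast(t)\in H$ at atoms because the left side of~\eqref{eq:1} jumps by an $H$-valued amount there). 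From these one extracts, along a subsequence, a limit process $\tilde v$ that is cadlag in $H$ (uniform-in-$t$ convergence of $|y_\varepsilon(t)|^2$ plus weak convergence in $H$ upgrading to strong via the norm convergence), satisfies $\tilde v = v$ $dA\times\P$-a.e. (part (i)), satisfies~\eqref{eq:2} by testing against $\varphi\in V$ and using density/separability of $V$ to get a single exceptional set $\tilde\Omega$ (part (ii)), and satisfies~\eqref{eq:3} (part (iii)).

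\emph{Main obstacle.} The delicate point is step 3(a)–(c) executed \emph{pathwise in a measure-theoretic sense}, i.e.\ controlling the cross term $\sum_i\int\langle v^\ast_{i,\varepsilon},v_\varepsilon\rangle\,dA$ and the jump term simultaneously when $A$ has atoms, since there the time-averaging $v^\ast_{i,\varepsilon}$ does not commute with the pairing in a naive way and one must argue that the "lost mass" at each atom reassembles into the explicit $\Delta A$ correction; this is where the separate assumptions on each $\|v\|_{V_i}^{p_i}$ and $\|\eta_i\|_{V_i^\ast}^{q_i}$ (rather than on the single product $\|y\|_V\|v^\ast\|_{V^\ast}$) are essential, because only the former survive the decomposition $v^\ast=\sum_i v_i^\ast$ — see Remark~\ref{rem integrability}. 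A secondary technical nuisance is ensuring all convergences hold \emph{uniformly in $t$} on compacts so that the limit is genuinely cadlag and the exceptional null set can be taken independent of $t$ and of $\varphi$; this is handled by a Doob-type maximal inequality for the martingale part and by the monotonicity of $A$ for the drift part, together with the separability of $V$.
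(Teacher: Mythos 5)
Your Step 1 is essentially the paper's reduction, but from Step 2 onward you take a genuinely different route (time-mollification \`a la Lions/Pardoux) from the one the paper actually uses, and as written it has a gap at exactly the point you flag as ``the main obstacle''. The difficulty is not bookkeeping: the identity you need in Step 2, namely that $y_\varepsilon(t)=\int_0^t v^\ast_\varepsilon\,ds+h_\varepsilon(t)$ is (a version of) the mollification of $v$, requires commuting the averaging operator with the map $t\mapsto\int_{(0,t]}v^\ast\,dA$, and this commutation fails precisely at the atoms of $A$; consequently the deterministic chain rule applied to $|y_\varepsilon|^2$ does not produce a quantity whose limit can be identified with the right-hand side of \eqref{eq:3} by the convergences (a)--(c) alone. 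Moreover you cannot defer this to \cite{gyongy:krylov:on:stochastic:II}: that paper does not mollify, it discretises. The device the paper uses instead is a random partition built from the inverse time change $\beta(r)=\inf\{t:A(t)\ge r\}$ (Lemma~\ref{lemma:1}) together with Lemma~\ref{lemma:2}, which produces stopping times $\tau^n_j=\beta(r^n_j)$ at which \eqref{eq:1} holds almost surely, which contain every atom of $A$, and along which the piecewise-constant approximations of $v$ converge in each $L_{p_i}(dA\times\P;V_i)$ (Proposition~\ref{propn:1}); the formula is then obtained from an exact discrete summation-by-parts identity (Proposition~\ref{proposition 2}) and the jump correction is identified as the limit of the sum of squared increments $K_n(t)$ via a two-sided estimate using the finite-rank projections $\Pi_k$. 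Your proposal contains no substitute for any of these three ingredients.

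Two further points where the plan would not go through as stated. First, the conclusions (ii)--(iii) are asserted for \emph{all} $t\in[0,\tau(\omega))$, whereas \eqref{eq:1} holds only $dA\times\P$-a.e.; a mollification argument naturally yields the identity for $dA$-a.e.\ $t$ only, and upgrading to all $t$ requires the a priori bound $\E\sup_{t\in I}|v(t)|^2<\infty$ of \eqref{eq:bnds_3}, which in the paper is itself extracted from the discrete identity by a bootstrap argument; you assert uniform-in-$t$ convergence but give no mechanism for proving it for the drift term. Second, for $p_i=1$, $q_i=\infty$ the strong $L_{q_i}$-convergence of $v^\ast_{i,\varepsilon}$ fails (translation is not continuous in $L_\infty$), and while the weak-$\ast$/strong pairing you mention saves the cross term, it does not help with the quadratic term whose limit must be identified with $\sum_{s\le t}|v^\ast(s)|^2|\Delta A(s)|^2$; that identification is exactly where the paper needs both a lower bound via Fatou and Parseval and an upper bound via the projections $\Pi_k$, and it is the step your outline leaves open.
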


Consider now a situation where the assumptions on 
$h$ and $A$ are as above but $m=1$ and regarding $v$ and $v^{\ast}:=v^{\ast}_1$
we know that $\|v(t)\|$, $\|v^{\ast}(t)\|_{V^*}$ and $\|v(t)\|\|v^{\ast}(t)\|_{V^*}$
are almost surely locally integrable with respect to $dA(t)$.
Let 
\begin{equation*}
\bar{v}^{\ast}(t) := \frac{v^{\ast}(t)}{1+\|v^{\ast}(t)\|_{V^*}}
\,\,\, \textrm{and} \,\,\,
\bar A(t) := \int_{(0,t]}(1+\|v^{\ast}(t)\|_{V^*})dA(t).	
\end{equation*}
Then $\|\bar{v}^{\ast}\|_{V^*} \leq 1$ and so $v$, $\bar{v}^{\ast}$ 
and $\bar A$ satisfy the conditions 
on $v$, $v^{\ast}$ and $A$, respectively, with $p_1 = 1$
and $q_1 = \infty$. 
If~\eqref{eq:1} holds 
for all $\varphi \in V$ and for $dA\times \P$ almost all $(\omega, t)$ 
such that $t\in (0,\tau(\omega))$ 
then
\begin{equation*}
( v(t), \varphi ) 
= \sum_{i=1}^m \int_{(0,t]} \langle \bar{v}^{\ast}(s),\varphi \rangle\, d\bar A(s) 
+ ( h(t),\varphi ).
\end{equation*}
Applying Theorem~\ref{thm:1} then means that we have all of its 
conclusions with $\bar{v}^{\ast}$ and $\bar A$ in place of $v^{\ast}$ and $A$ respectively.
In particular, we get 
\begin{equation*}
                                                                                                        \begin{split}
|\tilde v(t)|^2  = 
& |h(0)|^2 + 2 \int_{(0,t]} \langle \bar{v}^{\ast}(s), v(s) \rangle\, d\bar A(s) 
+ 2\int_{(0,t]} (\tilde v(s-),dh(s)) \\ 
& - \int_{(0,t]} \left|\bar{v}^{\ast}(s)\right|^2 \Delta \bar A(s) d\bar A(s) + [h]_t\\
= & |h(0)|^2 + 2 \int_{(0,t]} \langle v^{\ast}(s), v(s) \rangle\, dA(s) 
+ 2\int_{(0,t]} (\tilde v(s-),dh(s)) \\ 
& - \int_{(0,t]} \left|v^{\ast}(s)\right|^2 \Delta A(s)\,dA(s) + [h]_t.
\end{split}
\end{equation*}
Hence we see that Theorem~\ref{thm:1} is a generalisation of 
the main theorem in Gy\"ongy and 
Krylov~\cite{gyongy:krylov:on:stochastic:II}.

\begin{remark}
\label{rem integrability}
One might think that Theorem \ref{thm:1} 
follows from the main theorem 
in~\cite{gyongy:krylov:on:stochastic:II} by considering the process 
$v^{\ast}=\sum_iv^{\ast}_i$ as a process with values in $V^{\ast}$. 
However,  taking into account  
that for any $w^{\ast}\in V^{\ast}$ 
$$
\|w^{\ast}\|_{V^{\ast}}
=\inf\Big\{\max_{i=1,\ldots,m}\|w_i^{\ast}\|_{V^{\ast}_i}
:w^{\ast}=\sum_{i=1}^mw_i^{\ast}, w_i^{\ast}\in V_i^{\ast}\Big\}
$$
(see for example Gajewski, 
Gr{\"o}ger and Zacharias~\cite[Chapter 1, Theorem 5.13]{ggz}), 
one can show that the local integrability condition in \cite{gyongy:krylov:on:stochastic:II}
for 
$$
\|v\|_{V}\|v^{\ast}\|_{V^{\ast}}=(\|v\|_1+\cdots+\|v\|_m)\|v^{\ast}\|_{V^{\ast}}
$$ 
is not implied by our assumption \eqref{assumption main}. 
Thus the main theorem in  \cite{gyongy:krylov:on:stochastic:II} 
is not applicable in our situation. 
\end{remark}

We consider the following motivating example.
\begin{example}                                                                                         \label{ex spde}
Consider the stochastic partial differential equation
\begin{equation*}
\begin{split}
du = & \left[ \nabla(|\nabla u|^{p_1-2} \nabla u) 
+ |u|^{p_2-2} u \right] dt\\ 
& + f(u,\nabla u)\,dW 
+ \int_Z g(u)\,q(dt,dz)\,\, \textrm{in} \,\, \mathscr{D}\times (0,T).
\end{split}
\end{equation*}
Here $W$ is a Wiener process (finite or infinite dimensional depending
on the choice of $f$), 
$(Z,\Sigma)$ is a measurable space and $q(ds,dz)$ 
a stochastic martingale measure on $[0,\infty)\times Z$.
See, for example, Gy\"ongy and Krylov~\cite{gyongy:krylov:on:stochastic:I}
for detailed definition.
We take $\mathscr{D}$ to be a bounded Lipschitz domain in $\R^d$.

It is natural to assume that a solution $u$ should be such that 
$\|u\|_{W^{1}_{p_1}(\mathscr{D})}^{p_1}$
and
$\|u\|_{L_{p_2}(\mathscr{D})}^{p_2}$ are almost surely
locally integrable.  
To apply the result in 
Gy\"ongy and Krylov~\cite{gyongy:krylov:on:stochastic:II} 
one could try to take
$V := W^{1}_{p_1}(\mathscr{D}) \cap L_{p_2}(\mathscr{D})$
with the norm 
$\|\cdot\|_V = \|\cdot\|_{W^{1}_{p_1}(\mathscr{D})} +\|\cdot\|_{L_{p_2}(\mathscr{D})}$.
The dual of $V$ can be identified with the linear space
\begin{equation*}
V^* = \{f = f_1 + f_2 :f_1 \in W^{1}_{p_1}(\mathscr{D})^*, 
f_2 \in L_{p_2}(\mathscr{D})^*\}	
\end{equation*}
equipped with the norm
\begin{equation*}
\begin{split}
\|f\|_{V*} = \inf \{\max(&\|f_1\|_{W^{1}_{p_1}(\mathscr{D})^*},
\|f_2\|_{ L_{p_2}(\mathscr{D})^*}): \\
&  f = f_1 + f_2, f_1 \in W^{1}_{p_1}(\mathscr{D})^*,\,\, f_2\in L_{p_2}(\mathscr{D})^* \}.	
\end{split}
\end{equation*}
One would then need to show that
$\|u\|_V \, \|\nabla(|\nabla u|^{p_1-2}\nabla u)+|u|^{p_2-2} u\|_{V^*}$ 
is locally integrable.
To ensure this in general we need, in particular, that 
\begin{equation*}
\|u\|_{W^{1}_{p_1}(\mathscr{D})}\,
\||u|^{p_2-2} u\|_{L_{p_2}(\mathscr{D})^*}=
\|u\|_{W^{1}_{p_1}(\mathscr{D})}\,
\|u\|^{p_2-1}_{L_{p_2}(\mathscr{D})}
\end{equation*}
is locally integrable, which we may not have 
if $p_1 < p_2$.  
Thus one cannot apply the It\^o formula from Gy\"ongy and Krylov.
On the other hand it is easy to check that the assumptions of 
Theorem~\ref{thm:1} are satisfied. 
\end{example}

An application of the above It\^o's formula to SPDEs driven by 
Wiener processes is given in \cite{pardoux:thesis} (Chapter 2, 
Example 5.1) and in \cite{GSS}. Further examples can be found in 
\cite[Chapter 2, Section 1.7]{Lions}.

\section{Preliminaries}

\begin{lemma}                                                                                          \label{lemma:1}
For $r\in [0,\infty)$ let $\beta(r) := \inf\{ t\geq 0: A(t) \geq r\}$ and let 
$x(t)$ be a real valued process that is locally integrable with respect to 
$dA$	for all $\omega \in \Omega$. 
Then
\begin{enumerate}[i)]
\item $\beta(r)$ is a stopping time (not necessarily finite) for every $r\in [0,\infty)$,
\item 
\begin{equation*}
\begin{split}
\int_{(0,t]} x(s)\,dA(s) & = \int_{(0,A(t)]} x(\beta(r))\, dr,		\\
\int_{(0,t)} x(s)\,dA(s) & = \int_{(0,A(t-)]} x(\beta(r)) \,dr
\end{split}
\end{equation*}
for every $t\in [0,\infty)$,
\item 
\begin{equation*}
A(\beta(t)-) - A(\beta(s)) \leq t-s	
\end{equation*}
for every $s,t \in [0,\infty)$. 
\item If $0 = r^n_0 < r^n_1 < \cdots < r^n_k < \cdots $ is an increasing sequence
of decompositions of $[0,\infty)$ such that $\sup_{k}|r^n_{k+1} - r^n_k| \to 0$
as $n\to \infty$ then for every $t\geq 0$ and $\omega \in \Omega$
\begin{equation*}
\sum_k\left|X(\tau^n_{k+1}\wedge t) - X(\tau^n_k \wedge t)\right|^2 \to \sum_{s\leq t} |X(s)|^2 |\Delta A(s)|^2	
\end{equation*}
as $n\to \infty$, 
where $X(t):= \int_{(0,t]}x(s)dA(s)$ and $\tau^n_k := \beta(r^n_k)$.
\end{enumerate}
\end{lemma}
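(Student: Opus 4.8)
These are soft and I would dispatch them first. For (i) the key is the identity $\{\beta(r)\le t\}=\{A(t)\ge r\}$: the inclusion $\supseteq$ is immediate from the definition of the infimum, and $\subseteq$ follows from monotonicity and right-continuity of $A$ (the only borderline case, $\beta(r)=t$, is covered by $A(t)=A(t+)\ge r$). Since $A$ is adapted, $\{A(t)\ge r\}\in\F_t$, so $\beta(r)$ is a stopping time. For (iii), observe that $A(u)<r$ for every $u<\beta(r)$ by definition of $\beta$, hence $A(\beta(r)-)\le r$, while $A(\beta(r))\ge r$ whenever $\beta(r)<\infty$; applying the first inequality at level $t$ and the second at level $s$ and subtracting gives $A(\beta(t)-)-A(\beta(s))\le t-s$ (the cases where $\beta(s)$ or $\beta(t)$ is infinite being degenerate, handled by the convention $A(\infty):=\lim_{u\to\infty}A(u)$).

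\textbf{Part (ii).} I would prove both identities first for $x=\mathbf 1_{(a,b]}$ with $0\le a\le b$. Using the pointwise version of the computation in (i) one gets $\{r>0:a<\beta(r)\le b\}=(A(a),A(b)]$, so each side of the first formula equals $A(b)-A(a)$, and similarly for the open-interval version. Then I would extend by linearity to finite linear combinations of such indicators supported in $(0,t]$, by monotone convergence to nonnegative $dA$-locally integrable $x$, and to general $x$ by splitting into positive and negative parts. The second (open-interval) formula also follows by applying the first on $(0,t-1/n]$ and letting $n\to\infty$, using $A(t-1/n)\uparrow A(t-)$.

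\textbf{Part (iv): setup.} This is the real content. By (ii) applied to $|x|$, the function $g:=x\circ\beta$ is locally Lebesgue-integrable, so $G(r):=\int_{(0,r]}g(\rho)\,d\rho$ is absolutely continuous, and by (ii) again $X(u)=G(A(u))$ for all $u\ge0$. Set $\phi(r):=A(\beta(r)\wedge t)=A(\beta(r))\wedge A(t)$, a nondecreasing function with $\phi(r^n_k)\uparrow A(t)$, so that $X(\tau^n_k\wedge t)=G(\phi(r^n_k))$; for each $n$ only finitely many $k$ give a nonzero increment. For an index $k$ with $\tau^n_k<\tau^n_{k+1}$ and $\tau^n_k<t$ (outside this set the increment vanishes) put $y^n_k:=A(\tau^n_{k+1}-)\wedge A(t)$; then $\phi(r^n_k)\le y^n_k\le\phi(r^n_{k+1})$, with $y^n_k-\phi(r^n_k)\le r^n_{k+1}-r^n_k$ by (iii), and since $X=G\circ A$ with $G$ continuous one checks $G(\phi(r^n_{k+1}))-G(y^n_k)=\Delta X(\tau^n_{k+1})$ if $\tau^n_{k+1}\le t$ and $=0$ otherwise. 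Hence
\[
X(\tau^n_{k+1}\wedge t)-X(\tau^n_k\wedge t)=I^n_k+J^n_k,\quad I^n_k:=G(y^n_k)-G(\phi(r^n_k)),\quad J^n_k:=\Delta X(\tau^n_{k+1})\,\mathbf 1_{\{\tau^n_{k+1}\le t\}},
\]
where $\Delta X(s)=x(s)\,\Delta A(s)$, so that the right-hand side to be reached is $\sum_{s\le t}|\Delta X(s)|^2=\sum_{s\le t}|x(s)|^2|\Delta A(s)|^2$.

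\textbf{Part (iv): estimates and the main obstacle.} The intervals $(\phi(r^n_k),y^n_k]$ over the relevant $k$ are pairwise disjoint subintervals of $[0,A(t)]$ of length at most $\delta_n:=\sup_j|r^n_{j+1}-r^n_j|$, so $\sum_k|I^n_k|\le\int_{(0,A(t)]}|g|\,d\rho<\infty$ and $\max_k|I^n_k|\le\sup\{\int_E|g|\,d\rho:E\subset[0,A(t)],|E|\le\delta_n\}\to0$ by absolute continuity of the indefinite integral; hence $\sum_k|I^n_k|^2\le(\max_k|I^n_k|)\,\sum_k|I^n_k|\to0$. For the jump part, the values $\tau^n_{k+1}$ over the relevant $k$ are distinct, so $\sum_k|J^n_k|^2=\sum_{s\in S_n}|\Delta X(s)|^2$ for some $S_n\subset\{s\le t:\Delta A(s)>0\}$; as $n\to\infty$ the set $S_n$ eventually contains every jump time $s\le t$ with $\Delta A(s)>\delta_n$, because such an $s$ equals $\beta(r)$ for all $r\in(A(s-),A(s)]$, an interval of length $\Delta A(s)$, which once $\delta_n<\Delta A(s)$ contains a partition point and forces $s$ to appear as the right endpoint of a nonzero-increment cell. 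Since $\sum_{s\le t}|\Delta X(s)|^2<\infty$ (as $|\Delta X(s)|=|x(s)|\Delta A(s)\to0$ along any enumeration of the jumps, with $\sum_s|x(s)|\Delta A(s)\le\int_{(0,t]}|x|\,dA<\infty$), a squeeze gives $\sum_k|J^n_k|^2\to\sum_{s\le t}|\Delta X(s)|^2$. Finally $|2\sum_k I^n_k J^n_k|\le 2(\sum_k|I^n_k|^2)^{1/2}(\sum_k|J^n_k|^2)^{1/2}\to0$, and expanding $|I^n_k+J^n_k|^2$ gives the claim. The hard part is exactly the jump-part bookkeeping: verifying that, under the time change, each jump of $A$ below $t$ is eventually isolated as the right endpoint of exactly one partition cell, while dealing with the truncation at $t$ and with degenerate configurations such as $\beta(r)=\infty$ or flat stretches of $A$; the remaining estimates are routine once (iii) and the absolute continuity of $G$ are in hand.
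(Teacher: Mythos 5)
The paper offers no proof of this lemma at all---it is stated and immediately attributed to \cite[Lemma 1]{gyongy:krylov:on:stochastic:II}---so there is no in-paper argument to compare with; I have therefore checked your proof on its own terms, and it is correct and is essentially the standard argument of the cited reference. Parts (i)--(iii) follow from the identity $\{\beta(r)\le t\}=\{A(t)\ge r\}$ (using right-continuity of $A$) together with the two-sided bounds $A(\beta(r)-)\le r\le A(\beta(r))$; part (ii) is the usual extension from indicators of intervals via $\{r:a<\beta(r)\le b\}=(A(a),A(b)]$; and in part (iv) the representation $X=G\circ A$, with $G$ the absolutely continuous primitive of $x\circ\beta$, correctly splits each increment into an absolutely continuous piece $I^n_k$, whose square sum vanishes because $\max_k|I^n_k|\to0$ while $\sum_k|I^n_k|$ stays bounded by $\int_{(0,A(t)]}|x(\beta(\rho))|\,d\rho$, and a pure-jump piece $J^n_k$ squeezed between $\sum_{s\le t,\,\Delta A(s)>\delta_n}|\Delta X(s)|^2$ and $\sum_{s\le t}|\Delta X(s)|^2$. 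The one genuinely delicate step---that every jump time $s\le t$ with $\Delta A(s)>\delta_n$ is eventually recovered as the right endpoint $\tau^n_{k+1}$ of exactly one cell with $\tau^n_k<\tau^n_{k+1}$, even when $A$ increases continuously up to $A(s-)$ so that $\beta(A(s-))=s$---is covered by your observation that $\beta\equiv s$ on the whole interval $(A(s-),A(s)]$ of length $\Delta A(s)$: taking the minimal index $j$ with $\tau^n_j=s$ produces the required cell. Two small remarks. First, you silently and correctly read the limit in (iv) as $\sum_{s\le t}|x(s)|^2|\Delta A(s)|^2=\sum_{s\le t}|\Delta X(s)|^2$; the $|X(s)|^2$ in the displayed statement is a typo, as one sees from how the lemma is invoked later in the paper. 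Second, the inequality in (iii) as literally stated ``for every $s,t$'' can fail in degenerate cases with $\beta(s)=\infty$ (e.g.\ $A\equiv0$, $t=0$, $s=1$), so your appeal to a convention there does not quite repair the literal statement; but your argument, which uses only $A(\beta(t)-)\le t$ and $A(\beta(s))\ge s$ for $\beta(s)<\infty$, proves exactly the version that is actually needed.
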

This Lemma is proved in 
Gy\"ongy and Krylov~\cite[Lemma 1]{gyongy:krylov:on:stochastic:II}. 

Let $\kappa_n^{(j)}$ for $j=1,2$ and integers $n\geq1$ denote 
 the functions defined by 
 $$
 \kappa^{(1)}_n(t)=2^{-n}\lfloor 2^nt\rfloor, 
 \quad 
 \kappa^{(2)}_n(t)=2^{-n}\lceil 2^nt\rceil
 $$
The following lemma is known and the authors 
believe is due to Doob.  
\begin{lemma}
\label{lemma:2}
For integers $i\geq1$ let $(X_i,\|\cdot\|_{X_i})$ be Banach spaces,  
and let $p_i \in [1,\infty)$. 
Let $x_i: \R  \times \Omega \to X_i$ be $ \mathscr{B}(\R) \otimes \F$ Bochner-measurable 
such that $x_i(r)= 0$ for $r\notin [0,1]$ and
\begin{equation*}
\alpha_i:=\E\int_{0}^{1} \|x_i(r)\|_{X_i}^{p_i} \,dr < \infty.	
\end{equation*}
Then there exists a subsequence $n_k \to \infty$  such that 
for $dt$-almost all $t\in [0,1]$
\begin{equation*}
\E \int_{(0,1]}\|x_i(r) - x_i(\kappa^{(j)}_{n_k}(r-t)+t)\|_{X_i}^{p_i}\, dr 
\to 0
\,\,\textrm{ as }\,\, k\to \infty	
\end{equation*}
for $j=1,2$ and all $i\geq1$. 
\end{lemma}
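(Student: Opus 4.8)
The plan is to reduce the statement to the classical fact that translation is continuous in $L_p$, combined with a diagonalisation over the finitely — and then countably — many indices $i$. First I would fix a single $i$ and suppress it from the notation, writing $x = x_i$, $p = p_i$, $X = X_i$. Extend $x$ to all of $\R\times\Omega$ by zero outside $[0,1]$, so that $x \in L_p(\R\times\Omega; X)$ with $\E\int_{\R}\|x(r)\|_X^p\,dr = \alpha_i < \infty$. The key classical input is that the map $\theta \mapsto x(\cdot + \theta)$ is continuous from $\R$ into $L_p(\R\times\Omega;X)$; equivalently, $\Phi(\theta) := \E\int_{\R}\|x(r) - x(r+\theta)\|_X^p\,dr \to 0$ as $\theta \to 0$. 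This is proved in the usual way: it holds for $X$-valued continuous functions with compact support by uniform continuity and dominated convergence, and such functions are dense in $L_p(\R\times\Omega;X)$, while the translation operators are isometries on that space, so a standard $3\varepsilon$-argument gives continuity of $\Phi$ at $0$; note $\Phi$ is bounded by $2^p\alpha_i$.

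Next I would relate $\kappa^{(j)}_n(r-t)+t$ to a translate of $r$. For fixed $t$ write $\delta_n^{(j)}(r,t) := \kappa^{(j)}_n(r-t)+t - r$; by definition of $\kappa^{(1)}_n, \kappa^{(2)}_n$ we have $|\delta_n^{(j)}(r,t)| \le 2^{-n}$ for all $r,t$, and $\delta_n^{(j)}$ is constant in $r$ on each dyadic interval of the form $[t + k2^{-n}, t+(k+1)2^{-n})$. The obstacle is that the shift $\delta_n^{(j)}$ depends on $r$, so one cannot write the integrand directly as $\Phi$ of a single shift; instead I would bound
\begin{equation*}
\E\int_{(0,1]}\|x(r) - x(\kappa^{(j)}_{n}(r-t)+t)\|_X^p\,dr
\le \int_{[-2^{-n},2^{-n}]} \Psi_n(\theta,t)\,d\mu_n^{(j)}(\theta,t),
\end{equation*}
or, more cleanly, average over the offset. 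The cleanest route: for $j=1$ the value $\kappa^{(1)}_n(r-t)+t$ equals $r - s$ where $s = \{2^n(r-t)\}2^{-n} \in [0,2^{-n})$ is the fractional part; integrating in $r$ and using Fubini in the variable $s$ (which, as $r$ ranges over a dyadic block, sweeps uniformly over $[0,2^{-n})$), one gets
\begin{equation*}
\E\int_{\R}\|x(r) - x(\kappa^{(1)}_{n}(r-t)+t)\|_X^p\,dr = 2^n\int_0^{2^{-n}} \E\int_{\R}\|x(r)-x(r - s + \text{(jump corrections)})\|_X^p\,dr\,ds,
\end{equation*}
and the jump corrections are harmless because they only move $r$ by a full multiple of $2^{-n}$ at dyadic boundaries, a set of $dr$-measure handled by absolute continuity of the integral. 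Rather than push this delicate bookkeeping, the more robust approach is to invoke Lebesgue differentiation / a martingale argument: set $t$ aside and show that for a.e.\ $t$, $x(\kappa^{(j)}_n(r-t)+t) \to x(r)$ in $L_p(\R\times\Omega;X)$. Define $F_n(t) := \E\int_{\R}\|x(r) - x(\kappa^{(j)}_n(r-t)+t)\|_X^p\,dr$; a change of variables $u = r - t$ gives $F_n(t) = \E\int_{\R}\|x(u+t) - x(\kappa^{(j)}_n(u)+t)\|_X^p\,du$, and integrating in $t$ over a bounded interval and applying Fubini together with the translation-continuity estimate and the fact that $\kappa^{(j)}_n(u) \to u$ for every $u$, dominated convergence yields $\int F_n(t)\,dt \to 0$. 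Hence $F_n \to 0$ in $L_1(dt)$, so some subsequence $F_{n_k}(t) \to 0$ for $dt$-a.e.\ $t$.

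Finally I would handle the dependence on $i$. For each $i$ the above produces, after passing to a subsequence, a full-measure set of good $t$. Doing this successively for $i = 1, 2, \dots$ and extracting a diagonal subsequence $n_k$, together with a countable intersection of the full-measure exceptional-set complements, gives a single subsequence $n_k \to \infty$ and a single $dt$-null set off which the convergence
\begin{equation*}
\E\int_{(0,1]}\|x_i(r) - x_i(\kappa^{(j)}_{n_k}(r-t)+t)\|_{X_i}^{p_i}\,dr \to 0
\end{equation*}
holds simultaneously for $j = 1,2$ and all $i \ge 1$, as required. (Restricting the $r$-integral from $\R$ to $(0,1]$ only decreases the integrand, so the stated conclusion follows a fortiori.) The main obstacle is the one flagged above: the shift $\kappa^{(j)}_n(r-t)+t - r$ is not a constant translation but a sawtooth depending on $r$ through its position within a dyadic block; the device that overcomes it is to integrate out the offset variable $t$ (equivalently, average over the phase), reducing everything to the genuine translation-continuity of $L_p$ and Fubini's theorem, at the cost of obtaining the conclusion only along a subsequence and only for a.e.\ $t$ — which is exactly what the statement claims.
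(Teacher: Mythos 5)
Your argument is correct and is essentially the paper's own proof: after the change of variables $u=r-t$ you integrate over $t$, apply Fubini, use translation continuity of $L_{p}$ in the $t$-variable together with $|\kappa^{(j)}_n(u)-u|\le 2^{-n}$ and dominated convergence to get $\int_0^1 F_n(t)\,dt\to 0$, and then extract a subsequence converging for $dt$-a.e.\ $t$. The only (immaterial) difference is in handling the countably many indices: the paper works with a single weighted sum $\sum_i c_i(\cdots)$ with $\sum_i c_i 2^{p_i}\alpha_i<\infty$, so one subsequence serves all $i$ at once, whereas you use a nested diagonal extraction, which is equally valid; your abandoned ``phase-averaging'' detour is unnecessary since the Fubini-in-$t$ route already circumvents the $r$-dependence of the shift.
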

\begin{proof} 
Let $(c_i)_{i=1}^{\infty}$ be a sequence of positive numbers such that 
$$
\sum_{i=1}^{\infty}c_i2^{p_i}\alpha_i<\infty. 
$$
By change of variables and changing the order of integration 
$$
I_n:=
\sum_{i=1}^{\infty}c_i\int_0^1
\E \int_{0}^{1}\|x_i(r) - x_i(\kappa^{(j)}_{n}(r-t)+t)\|_{X_i}^{p_i}\,dr\,dt 
$$
$$
\leq \sum_{i=1}^{\infty}c_i
\E\int_{-1}^1\int_{0}^1\|x_i(s+t)-x_i(\kappa^{(j)}_n(s)+t)\|^{p_i}_{X_i}\,dt\,ds. 
$$
Note that by the shift invariance of the Lebesgue measure
$$
J_{in}(s):=\int_{0}^1\|x_i(s+t)-x_i(\kappa^{(j)}_n(s)+t)\|^{p_i}_{X_i}\,dt\to 0\,(a.s.)
$$
for $s\in(0,1)$,  $i\geq1$,  
and  
$$
\sum_{i=1}^{\infty}c_i|J_{in}(s)|
\leq \sum_{i=1}^{\infty}c_i2^{p_i-1}\left(\int_{0}^1\|x_i(s+t)\|^{p_i}_{X_i}\,dt
+\int_{0}^1\|x_i(\kappa_n(s)+t)\|^{p_i}_{X_i}\,dt\right)
$$
$$
\leq \sum_{i=1}^{\infty}{c_i}2^{p_i}
\int_{0}^1\|x_i(t)\|^{p_i}_{X_i}\,dt. 
$$ 
Therefore by Lebesgue's theorem on dominated convergence 
$$
I_n=\int_0^1\left(\sum_{i=1}^{\infty}c_i
\E \int_{0}^{1}\|x_i(r) - x_i(\kappa^{(j)}_{n}(r-t)+t)\|_{X_i}^{p_i}\,dr\right)\,dt \to0. 
$$
Hence for a subsequence $n_k\to\infty$ 
$$
\sum_{i=1}^{\infty}c_i
\E \int_{0}^{1}\|x_i(r) - x_i(\kappa^{(j)}_{n}(r-t)+t)\|_{X_i}^{p_i}\,dr \to0
$$
for almost all $t\in[0,1]$, and the statement of the lemma follows. 
\end{proof}

The following lemma is proved in 
Gy\"ongy and Krylov~\cite[Lemma 3]{gyongy:krylov:on:stochastic:II}. 

\begin{lemma}
\label{lemma:3}
Let $(\xi_n)_{n\in \N}$ be a sequence of $H$-valued predictable processes. 
Suppose 
\begin{equation*}
\P \left[\sup_{n\in \N, t\leq T} |\xi_n(t)| < \infty \right]	 = 1 
\end{equation*}
and
\begin{equation*}
\P\left[ \forall t \leq T,\,\,\forall \varphi \in H\,\,\,
 \lim_{n\to \infty} (\xi_n(t),\varphi) = 0\right] = 1.	
\end{equation*}
Then for any $\varepsilon > 0$ 
\begin{equation*}
\P \left[
\sup_{t\leq T} \left|\int_{(0,t]}(\xi_n(s), dh(s))\right| > \varepsilon 
\right] \to 0	
\end{equation*}
as $n\to \infty$.
\end{lemma}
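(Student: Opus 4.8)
The plan is to establish Lemma~\ref{lemma:3} by the stochastic-analysis analogue of the dominated convergence theorem for stochastic integrals, combined with a stopping-time localisation argument. First I would introduce, for each $N>0$, the stopping time $\sigma_N := \inf\{t\geq 0 : \sup_{n} |\xi_n(t)| > N \text{ or } [h]_t > N\} \wedge T$. By the first hypothesis, $\sigma_N \uparrow T$ (in the sense that $\P[\sigma_N = T \text{ eventually}] \to 1$) as $N\to\infty$, so it suffices to prove that for each fixed $N$,
\begin{equation*}
\P\left[\sup_{t\leq \sigma_N} \left|\int_{(0,t]}(\xi_n(s),dh(s))\right| > \varepsilon\right] \to 0.
\end{equation*}

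Next, I would estimate using the Burkholder--Davis--Gundy inequality applied to the stopped martingale $t\mapsto \int_{(0,t\wedge\sigma_N]}(\xi_n(s),dh(s))$. Since $h$ is an $H$-valued locally square integrable martingale, this yields
\begin{equation*}
\E\sup_{t\leq\sigma_N}\left|\int_{(0,t]}(\xi_n(s),dh(s))\right|^2
\leq C\,\E\int_{(0,\sigma_N]}|\xi_n(s)|^2\,d[h]_s,
\end{equation*}
or more precisely the trace-measure version of the quadratic variation of the $H$-valued martingale. The key point is then to show the right-hand side tends to $0$. On $(0,\sigma_N]$ we have the bound $|\xi_n(s)|^2 \leq N^2$ uniformly in $n$, and $[h]_{\sigma_N} \leq N$, so $|\xi_n(s)|^2$ is dominated by the integrable (in $d[h]_s \times \P$) constant $N^2$. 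The second hypothesis gives $(\xi_n(s),\varphi) \to 0$ for all $\varphi\in H$, hence $\xi_n(s) \weaklyto 0$ weakly in $H$ for each $(s,\omega)$; but weak convergence does not immediately give $|\xi_n(s)| \to 0$, so one cannot apply dominated convergence directly to the integrand.

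The main obstacle is precisely this gap between weak convergence of $\xi_n(s)$ and the convergence of $\int|\xi_n(s)|^2\,d[h]_s$ that BDG seems to demand. I would circumvent it by not passing through the pathwise quadratic variation but instead exploiting the martingale structure more carefully: write $h$ via its spectral/series representation relative to an orthonormal basis $(e_k)$ of $H$, so that $\int_{(0,t]}(\xi_n(s),dh(s)) = \sum_k \int_{(0,t]}(\xi_n(s),e_k)\,dh^k(s)$ where $h^k := (h,e_k)$ are real local martingales with $\sum_k \langle h^k\rangle_t = \langle h\rangle_t$ of bounded expectation on $[0,\sigma_N]$. Now each scalar integrand $(\xi_n(s),e_k)$ is bounded by $N$ and converges to $0$ pointwise by the second hypothesis, so by dominated convergence $\E\int_{(0,\sigma_N]}|(\xi_n(s),e_k)|^2\,d\langle h^k\rangle_s \to 0$ for each fixed $k$; combining this with a tail estimate $\sum_{k>K}\E\int_{(0,\sigma_N]}|(\xi_n(s),e_k)|^2\,d\langle h^k\rangle_s \leq N^2\,\E(\langle h\rangle_{\sigma_N} - \sum_{k\leq K}\langle h^k\rangle_{\sigma_N})$, which is small uniformly in $n$ for $K$ large, yields the desired convergence of the full sum. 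An application of the maximal/Doob inequality then converts the $L^2$ convergence into the convergence in probability of the supremum, completing the proof; the localisation via $\sigma_N$ removes the final restriction.
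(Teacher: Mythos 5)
The paper does not actually prove Lemma~\ref{lemma:3}; it is quoted from Gy\"ongy and Krylov \cite[Lemma 3]{gyongy:krylov:on:stochastic:II}, so there is no in-text argument to compare against. Your proof is, in substance, the standard argument used there: localise so that the integrands are uniformly bounded and $h$ is square integrable, split off a finite-dimensional projection $\Pi_K$ of $H$, kill the projected part by dominated convergence (each coordinate $(\xi_n,e_k)$ is bounded and tends to $0$ pointwise), and control the tail uniformly in $n$ by $N^2\,\E\langle (I-\Pi_K)h\rangle_{\sigma_N}=N^2\,\E|(I-\Pi_K)(h(\sigma_N)-h(0))|^2\to0$; Doob's inequality then upgrades the $L^2$ bounds to the supremum. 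This is correct in outline, and you correctly identified that naive dominated convergence fails because weak convergence $\xi_n(s)\weaklyto 0$ does not give $|\xi_n(s)|\to0$. Two points deserve more care in the write-up. First, the identity you implicitly use, $\E\bigl|\int(\xi_n,dh)\bigr|^2=\sum_k\E\int(\xi_n,e_k)^2\,d\langle h^k\rangle$, is false in general because the brackets $\langle h^k,h^j\rangle$ need not vanish for $k\neq j$; the clean way is to write the finite part as $\int(\Pi_K\xi_n,dh)$ and the tail as $\int(\xi_n,d((I-\Pi_K)h))$, and to use the trace bound $(\eta,Q\eta)\leq\operatorname{tr}(Q)\,|\eta|^2$ for the operator density $Q$ of the tensor quadratic variation --- with that correction both of your estimates (dominated convergence for fixed $K$, and the tail bound $N^2\,\E\sum_{k>K}\langle h^k\rangle_{\sigma_N}$) are valid as stated. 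Second, minor technicalities: the bound $|\xi_n(s)|\leq N$ is only guaranteed for $s<\sigma_N$, not at $s=\sigma_N$ (the processes are not assumed regular), and one should check $\E\langle h\rangle_{\sigma_N}<\infty$ despite a possible jump of $[h]$ at $\sigma_N$; both are handled by the usual refinements of the localisation (e.g.\ replacing $\xi_n$ by $\xi_n\one{1}_{\{|\xi_n|\leq N\}}$ and using the local property of the stochastic integral, and using the local square integrability of $h$ directly). None of these affects the validity of the approach.
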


\section{Proof of the Main Result}                                           \label{sec:proof}

The following standard steps, as in 
Krylov and Rozovskii~\cite{krylov:rozovskii:stochastic}, 
allow us to work under more convenient assumptions 
without any loss of generality. 

\begin{enumerate}[1)]
\item We note that $\tau$ can be assumed to be a bounded stopping time.
Indeed if we prove Theorem~\ref{thm:1} under this assumption then we can
extend it to unbounded stopping times by considering $\tau \wedge n$ 
and letting $n\to \infty$. In fact using a non-random time change 
we may assume that $\tau\leq 1$.	
\item Recall the processes $\eta_i$ from assumption 
\eqref{assumption main}, and set 
$$
Q_i(t)=\left(\int_{(0,t]} \eta_i^{q_i}(s)\,dA(s)\right)^{1/q_i}\quad t\geq0
$$
when $q_i<\infty$, and for $q_i=\infty$ let $Q_i=(Q_i(t))_{t\geq0}$ denote a nondecreasing 
 cadlag adapted process such that almost surely
 $$
\text{$dA$-ess\,sup}_{s\leq t}\eta_i(s)\leq Q_i(t)\quad \text{ for all $t\geq0$}.
$$ 
It is not difficult 
to see that such a process $Q_i$ exists, we can take, e.g., the adapted right-continuous 
modification of the process $\text{$dA$-ess\,sup}_{s\leq t}\eta_i(s)$, i.e.,
$$
\lim_{n\to\infty}\text{$dA$-ess\,sup}_{s\leq t+1/n}\eta_i(s). 
$$
Let $(e^j)_{j\in \N} \subset V$ be an orthonormal basis in $H$ and define  
\begin{equation}
\label{eq:r_def}
\begin{split}
& r(t):=|h(0)|+A(t)
+ \sum_{i=1}^m\left(\int_{(0,t]} \|v(s)\|_{V_i}^{p_i} dA(s)\right)^{1/p_i} \\
& + \sum_{i=1}^mQ_i(t)
+ \sum_{i=1}^m\sum_{k\in \N} 2^{-c_k} \left(\int_{(0,t]}\|w_k(s)\|^{p_i}_{V_i} dA(s)\right)^{1/p_i},  
\end{split}
\end{equation}
with $c_k:=\max_{1\leq i\leq m}\sum_{j\leq k}|e_j|^2_{V_i}$ and 
$w_k := \Pi^k h$, 
where $\Pi^k$ denotes the orthogonal projection of $H$ 
onto its subspace spanned by $(e_i)_{i=1}^k$. 
We may and will assume, without loss of generality,
that $r$ and $\langle h \rangle$ are bounded.
Indeed, imagine we have proved Theorem~\ref{thm:1} under this assumption.
Consider
\begin{equation*}
\tau_n := \inf\{t \geq 0: r(t) \geq n\}. 
\end{equation*}
Then $\tau_n$ is a stopping time and $\tau_n\to\infty$ 
for $n\to\infty$. Since $\langle h\rangle$ is a predictable 
process starting from $0$, there is an increasing 
 sequence of stopping 
times $\sigma_n$ such that $\sigma_n\to\infty$ 
and $\langle h\rangle_t\leq n$
for $t\in[0,\sigma_n]$. Therefore 
$\tau_n \wedge \sigma_n \wedge \tau \to \tau$ as $n\to \infty$, 
and for fixed $n$ we get 
$r(t)\leq n$ for $t\in(0,\tau_n\wedge\sigma_n)$ 
and $\langle h \rangle_t \leq n$ for $t\in[0,\tau_n\wedge\sigma_n]$.   
Thus we get~\eqref{eq:2} and~\eqref{eq:3} 
for the stopping time $\tau_n \wedge \sigma_n \wedge \tau$ 
in place of $\tau$. 
Letting $n\to \infty$ provides~\eqref{eq:2} and~\eqref{eq:3} 
for $\tau$. 
Thus we may assume that there is $n\geq1$ such that 
$r(t)\leq n$ for $t\in(0,\tau)$ and $\langle h\rangle_t\leq n$ 
for $t\in[0,\tau]$. Moreover, by taking  
$h{\bf1}_{|h(0)|<n}$, $v{\bf1}_{|h(0)|<n}$ and 
$A{\bf1}_{|h(0)|<n}$ in place of $h$, $v$ and 
$A$, respectively, and then taking $n\to\infty$,  
 we may assume that 
$r(t)\leq n$ for $t\in [0,\tau)$ and $\langle h\rangle_t\leq n$ for $t\in[0,\tau]$. 
Furthermore, we can define $A(t) := A(\tau-)$, $h(t)=h(\tau)$, $v(t) = 0$
and $v_i^{\ast}(t) = 0$ for $t\geq \tau$. 
Then $r(t) \leq n$ and $\langle h \rangle_t \leq n$ for $t\in [0,\infty)$.
\item Finally, we can assume that 
$r(t) \leq 1$ for $t\in [0,\tau)$ and $\langle h \rangle_t\leq 1$ for 
$t\in [0,\tau]$.
Indeed let $v_n := n^{-1}v$, $A_n := n^{-1}A$ and $h_n := n^{-1}h$.
Then $r_n$, defined analogously to $r$ in~\eqref{eq:r_def} but with $v$, $A$ 
and $h$ replaced by $v_n$, $A_n$ and $h_n$ respectively,
satisfies $r_n(t) \leq n^{-1}r(t) \leq 1$.
We thus get~\eqref{eq:2} and~\eqref{eq:3} with $v$, $A$ 
and $h$ replaced by $v_n$, $A_n$ and $h_n$ respectively. 
We can now multiply by $n$ and $n^2$ to obtain the desired conclusions. 
\end{enumerate}
Now we proceed to prove Theorem~\ref{thm:1} under the assumption that
$\tau \leq 1$, $r(t) \leq 1$ and $\langle h \rangle_t \leq 1$ 
for $t\in [0,\infty)$.
Our approach is the same as in 
Gy\"ongy and Krylov~\cite{gyongy:krylov:on:stochastic:II}. 
The idea is to approximate $v$ by simple processes 
whose jumps happen at stopping times where equation~\eqref{eq:1} 
holds. 
But~\eqref{eq:1} only holds for every $\varphi \in V$ 
and $dA\times \P$ almost all
$(t,\omega) \in \rrbracket0,\tau\llbracket$, and thus it is not immediately 
clear how to choose an appropriate piecewise constant approximation to $v$. 
Here and later on for stopping times $\tau$ the notation $\rrbracket0,\tau\llbracket$ 
means the stochastic interval 
$\{(t,\omega):t\in(0,\tau(\omega)),\omega\in\Omega\}$.

\begin{proposition}                                                                                    \label{propn:1}
There is a nested sequence of  random partitions of $[0,\infty]$, 
\begin{equation*}
0 = \tau^n_0 < \tau^n_1 \leq \tau^n_2 \leq \cdots \leq \tau^n_{N(n)+1} = \infty,	
\end{equation*}
with stopping times  $\tau^n_j$, $j=1,\ldots,N(n)+1$, such that 
for every $\omega \in \Omega$ either $\tau^n_j(\omega) < \tau(\omega)$
or $\tau^n_j(\omega) = \infty$, and such that the following statements hold. 
\begin{enumerate}[(1)]
\item There is $\Omega' \subset \Omega$ such that $\P(\Omega') = 1$ 
and with 
\begin{equation*}
I(\omega):=\{\tau^n_j(\omega):n\in \N, j=1,\ldots,N(n)\}\cap (0,\infty)
\end{equation*}
we have~\eqref{eq:1} satisfied for every 
$\omega \in \Omega'$, $t\in I(\omega)$ 
and $\varphi \in V$. Moreover, if $\Delta A(t)>0$ for 
some $t>0$ and $\omega\in\Omega'$, then $t\in I(\omega)$. 
Furthermore, if $0\leq s<t$ and $(s,t]\cap I(\omega)=\emptyset$, then 
$A(s)=A(t)$. 
\item 
For $l\in\{1,2\}$, $i=1,\ldots,m$ and for all $k\geq1$ 
\begin{equation}                                                                         \label{eq:lims1:in_proposition}
\begin{split}
\lim_{n\to \infty} \E\int_{(0,\infty)}
\|v(s) - v^{(l)}_n(s)\|_{V_i}^{p_i}\,dA(s) = 0,\\
\lim_{n\to \infty} \E\int_{(0,\infty)}
\|w_k(s) - w^{(l)}_{kn}(s)\|^{p_i}_{V_i}\,dA(s) = 0, 
\end{split}	
\end{equation}
where 
$$
v_n^{(1)}(t):=\sum_{j=1}^{N(n)}v(\tau_j^n){\bf1}_{[\tau_j^n,\tau_{j+1}^n)}(t), 
\quad 
v_n^{(2)}(t):=\sum_{j=0}^{N(n)}v(\tau_{j+1}^n){\bf1}_{(\tau_j^n,\tau_{j+1}^n]}(t), 
$$
and $w^{(l)}_{kn}$ is defined analogously from $w_k=\Pi^kh$. 
\end{enumerate}
\end{proposition}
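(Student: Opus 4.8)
The plan is to transfer the problem to the time-changed variable $r=A(t)$, where $dA$ becomes Lebesgue measure and a \emph{deterministic} shifted dyadic grid in $r$ pulls back to the desired random partition in $t$. Let $\beta(r):=\inf\{t\ge0:A(t)\ge r\}$ as in Lemma~\ref{lemma:1}; after the reductions already performed we have $A(t)\le r(t)\le1$, $\int_{(0,\infty)}\|v\|_{V_i}^{p_i}\,dA\le1$ and $\int_{(0,\infty)}\|w_k\|_{V_i}^{p_i}\,dA\le 2^{p_ic_k}$ almost surely, where $w_k=\Pi^kh$ and $c_k$ are as in~\eqref{eq:r_def}. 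Put $\hat v_i(r):=v(\beta(r))\mathbf 1_{(0,A(\tau-)]}(r)$ and $\hat w_k(r):=w_k(\beta(r))\mathbf 1_{(0,A(\tau-)]}(r)$; these are $\mathscr B(\R)\otimes\F$-measurable into $V_i$ and vanish outside $[0,1]$, and by the change-of-variables formula of Lemma~\ref{lemma:1}(ii) they satisfy the integrability hypotheses of Lemma~\ref{lemma:2}. Lemma~\ref{lemma:2} then produces a single subsequence $n_k\to\infty$ and a Lebesgue-null set $\mathcal N_0\subset[0,1]$ off which its conclusion holds for all $i$, all $k$ and $l\in\{1,2\}$.

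Next I fix the shift. Let $N(\omega)\subset(0,\tau(\omega))$ be the set of times at which~\eqref{eq:1} fails for some $\varphi\in V$: taking a countable dense $\{\varphi_l\}\subset V$ and using $|\langle v^\ast_i(s),\varphi\rangle|\le\eta_i(s)\|\varphi\|_{V_i}$ together with the local $dA$-integrability of the $\eta_i$ and of $\|v\|_{V_i}$ from~\eqref{assumption main}, the identity~\eqref{eq:1} for all $\varphi_l$ extends, as an identity between continuous functions of $\varphi\in V$, to all of $V$ for $dA\times\P$-a.e.\ $(t,\omega)$ with $t<\tau$; hence $N(\omega)$ is $dA$-null for $\P$-a.e.\ $\omega$. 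By Lemma~\ref{lemma:1}(ii) this gives $\int_0^\infty\P(\beta(r)\in N(\cdot))\,dr=0$, so $D:=\{r>0:\P(\beta(r)\in N(\cdot))>0\}$ is Lebesgue-null. Choose a non-dyadic $t^*\in(0,1)\setminus\mathcal N_0$ with $t^*\notin\bigcup_{n\ge1,\,j\ge0}(D-2^{-n}j)$ and such that almost surely no point $2^{-n}j+t^*$ equals $A(\tau-)$; each exclusion is of a null or countable set, so such $t^*$ exists. The level-$n$ partition is now: the points $0$ and $(2^{-n}\mathbb Z+t^*)\cap(0,A(\tau-)]$ carried to the $t$-axis by $\beta$, together with the finitely many jump times $t$ of $A$ with $\Delta A(t)\ge2^{-n}$, listed increasingly as $0=\tau^n_0<\tau^n_1\le\cdots\le\tau^n_{N(n)+1}:=\infty$. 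By Lemma~\ref{lemma:1}(i) these are stopping times; the partitions are nested (dyadic grids refine and the jump threshold decreases), have $r$-mesh $2^{-n}\to0$, and each $\tau^n_j$ is $<\tau$ or $=\infty$ by the choice of $t^*$ (the borderline value $r=A(\tau-)$ having been avoided, and $A$ being flat on $[\tau,\infty)$ after the reductions).

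Then one reads off (1)--(2). Each dyadic point $r^n_j$ lies outside $D$, so $\P(\beta(r^n_j)\in N(\cdot))=0$, while each added jump time $t$ has $t\notin N(\omega)$ automatically, since $dA(\{t\})=\Delta A(t)>0$ and $N(\omega)$ is $dA$-null; intersecting over the countably many $(n,j)$ gives $\Omega'$ with $\P(\Omega')=1$ on which~\eqref{eq:1} holds at every finite $\tau^n_j(\omega)$, for all $\varphi\in V$. If $\Delta A(t)>0$, then for $n$ large $\Delta A(t)\ge2^{-n}$ and $t$ is a partition point, so $t\in I(\omega)$; if $0\le s<t$ with $A(s)<A(t)$, then for $n$ large $(A(s),A(t)]$ contains some $r\in2^{-n}\mathbb Z+t^*$ and $\beta(r)\in(s,t]\cap I(\omega)$, whence $(s,t]\cap I(\omega)=\emptyset$ forces $A(s)=A(t)$. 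Finally, by Lemma~\ref{lemma:1}(ii),
\[
\E\!\int_{(0,\infty)}\!\|v-v^{(l)}_{n_k}\|_{V_i}^{p_i}\,dA
=\E\!\int_{(0,A(\tau-)]}\!\big\|\hat v_i(r)-v^{(l)}_{n_k}(\beta(r))\big\|_{V_i}^{p_i}\,dr,
\]
and by construction $v^{(l)}_{n_k}(\beta(r))=\hat v_i(\kappa^{(l)}_{n_k}(r-t^*)+t^*)$ for $dr$-a.e.\ $r$ (the jump cells being resolved exactly, while a contribution supported on $(0,\tau^{n_k}_1)$ of $dA$-mass tending to $0$ is set aside), so the right-hand side tends to $0$ by Lemma~\ref{lemma:2}; the $w_k$-statement follows the same way, and relabelling $n_k$ as $n$ finishes the proof.

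The step I expect to be the main obstacle is the identification $v^{(l)}_{n}(\beta(r))=\hat v_i(\kappa^{(l)}_{n}(r-t^*)+t^*)$: one has to check it holds off a $dr$-null set across the flats of $A$ (where $\beta$ jumps) and the jumps of $A$ (where $\beta$ is constant) — which is exactly why the jump times are forced into the partition — and one must control the behaviour on the initial segment $(0,\tau^n_1)$ and near $A(\tau-)$ so that it contributes only a term vanishing as $n\to\infty$; together with checking that one shift $t^*$ can be chosen to satisfy all the simultaneous requirements above.
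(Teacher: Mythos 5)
Your construction is, in essence, the paper's own proof: time-change by $\beta$, the Fubini argument to locate a full $dr$-measure set of levels $r$ at which \eqref{eq:1} holds almost surely at $t=\beta(r)$, Lemma~\ref{lemma:2} applied to $v(\beta(\cdot))$ and $w_k(\beta(\cdot))$ to produce a shifted dyadic grid, the choice $\tau^n_j:=\beta(r^n_j)$, and the transfer of the $L^{p_i}(dA)$ convergences back via Lemma~\ref{lemma:1}(ii). The one step that is incorrect as written is the identity $v^{(2)}_n(\beta(r))=\hat v\bigl(\kappa^{(2)}_n(r-t^*)+t^*\bigr)$ for $dr$-a.e.\ $r$. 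If a grid point $r^n_j$ falls inside a jump interval $(A(t_0-),A(t_0)]$ and $r\in(r^n_j,A(t_0)]$, then $\beta(r)=\beta(r^n_j)=\tau^n_j$, so $\beta(r)$ lies in the cell $(\tau^n_{j-1},\tau^n_j]$ of the $t$-partition and $v^{(2)}_n(\beta(r))=v(\beta(\kappa^{(1)}_n(r)))$ rather than $v(\beta(\kappa^{(2)}_n(r)))$; the union over $j$ of these sets has measure comparable to the total mass of the jumps of $A$ and is not $dr$-null. Forcing the jump times with $\Delta A(t)\ge 2^{-n}$ into the partition does not cure this: a jump interval of length at least the mesh already contains a grid point, so those times are in the partition anyway, while the offending cells come from the jumps of size smaller than $2^{-n}$, which you do not add. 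The paper's resolution is the one-line case distinction $v^{(2)}_n(\beta(r))={\bf 1}_{S_n}(r)\,v(\beta(\kappa^{(2)}_n(r)))+(1-{\bf 1}_{S_n}(r))\,v(\beta(\kappa^{(1)}_n(r)))$ for a suitable measurable set $S_n$, combined with the fact that Lemma~\ref{lemma:2} gives the convergence for \emph{both} $\kappa^{(1)}$ and $\kappa^{(2)}$ --- both limits are already in your toolkit, so the repair costs nothing. Everything else (the single shift $t^*$ chosen off the relevant null sets, the bookkeeping for $\Omega'$, jump times landing in $I(\omega)$ automatically because the $r$-mesh tends to zero, and $A(s)=A(t)$ when $(s,t]$ misses $I(\omega)$) matches the paper.
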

\begin{proof}
Since $V$ is separable there is $\{\varphi_i\}_{i\in \N}\subset V$ 
which is dense in $V$.
For each $\varphi_i$ there is an exceptional set 
$D_i \in [0,\infty)\times\Omega$ such that~\eqref{eq:1}
holds for $(t,\omega)\in\rrbracket0,\tau\llbracket\setminus D_i$
and $(dA\times \P)(D_i)=0$.
Let $D = \bigcup_{i\in \N} D_i$. 
Then $(dA \times \P)(D)=0$ and~\eqref{eq:1} holds for all $\varphi\in V$ 
and all $(t,\omega)\in\rrbracket0,\tau\llbracket\setminus D$.
Now using Lemma~\ref{lemma:1} and the Fubini theorem
\begin{equation*}
\begin{split}
0 & = \E\int_{(0,\tau)} \chi_D(s)\,dA(s) 
= \E \int_{(0,A(\tau-)]} \chi_D(\beta(r))\,dr\\
& = \int_{(0,\infty)} \P(r\leq A(\tau), (\beta(r),\omega)\in D)\,dr. 		
\end{split}
\end{equation*}
From this we see that for $dr$ almost all $r\in (0,\infty)$ there 
is $\Omega(r) \subset \Omega$ with $\P(\Omega(r)) = 1$ such that for 
any $\omega \in \Omega(r)$ either $r > A(\tau(\omega),\omega)$ 
or $\beta(r,\omega) < \tau(\omega)$ and for $t=\beta(r)$ and for
all $\varphi \in V$
\begin{equation}
\label{eq:1_again}
( v(t), \varphi ) = \sum_{i=1}^m \int_{(0,t]} \langle v_i^{\ast}(s),\varphi \rangle \,dA(s) 
+ ( h(t),\varphi ).	
\end{equation}
By virtue of Lemma~\ref{lemma:2} there is a nested sequence of 
decompositions of $[0,1]$, 
\begin{equation}
\label{eq r}
0 = r^n_0 < r^n_1 < \cdots < r^n_{N(n)+1} = 1, 	
\end{equation}
such that $\lim_{n\to \infty} \max_i |r^n_{j+1} - r^n_j| = 0$,  
and 
\begin{equation}
\label{eq:lims_with_beta}
\begin{split}
\lim_{n\to \infty} \E\int_0^1
\|v(\beta(r)) - v(\beta(\kappa_n^{(l)}(r)))\|_{V_i}^{p_i}\,dr = 0,\\
\lim_{n\to \infty} \E\int_0^1
\|w_k(\beta(r)) - w_k(\beta(\kappa_n^{(l)}(r)))\|^{p_i}_{V_i}\,dr = 0   
\end{split}	
\end{equation}
for all $i=1,\ldots,m$, all $k\in \N$ and $l=1,2$, where 
$\kappa_n^{(1)}(r) = r^n_j$ if $r\in [r^n_j,r^n_{j+1})$
and $\kappa_n^{(2)}(r) = r^n_{j+1}$  if $r\in (r^n_j,r^n_{j+1}]$.

Now let  $\Omega' := \bigcap_{n\in \N} 
\left( \Omega(r^n_0)\cap \ldots \cap\Omega(r^n_{N(n)+1})\right)$,  
$\tau^n_j := \beta(r^n_j)$, and 
\begin{equation*}
I(\omega):=\{\tau^n_i(\omega):n\in \N, i=1,\ldots,N(n)\}\cap (0,\infty).	
\end{equation*}
Then $\P(\Omega') = 1$ and 
\begin{equation*}
0 = \tau^n_0 < \tau^n_1 \leq \tau^n_2 \leq \cdots \leq \tau^n_{N(n)+1} = \infty,	\quad n=1,2,\ldots,
\end{equation*}
is a nested sequence of random partitions of $(0,1)$ by stopping times 
$\tau^n_j$ such that  
statement (1) holds. 
To prove (2) we notice that, 
just like in \cite{gyongy:krylov:on:stochastic:II}, 
for $r\in(r_j^n,r^n_{j+1}]$ 
$$
v_n^{(2)}(\beta(r))=
\left\{ 
\begin{array}{lll}
v(\beta(r^n_{j+1})) = v(\beta(\kappa^{(2)}_n(r))) & \text{if} & \beta(r^n_j)  < \beta(r) \\
v(\beta(r^n_j)) = v(\beta(\kappa^{(1)}_n(r))) & \text{if} & \beta(r^n_j)  = \beta(r).
\end{array}
\right.
$$
Thus with appropriate sets $S_n\in\mathcal B(\R)\times\mathcal F$ 
$$
v^{(2)}(\beta(r))={\bf1}_{S_n}(r)v(\beta(\kappa^{(2)}_n(r))) - (1-{\bf1}_{S_n})(r))v(\beta(\kappa^{(1)}_n(r))).
$$
Hence due to~\eqref{eq:lims_with_beta} and Lemma~\ref{lemma:1} we obtain
the first equality in~\eqref{eq:lims1:in_proposition} for $l=2$, 
$i=1,\ldots,m$ and for all $k\in \N$.
The rest of~\eqref{eq:lims1:in_proposition} is obtained similarly. 
\end{proof}
\begin{proposition}                                   \label{proposition 2}
For every $n\in \N$,
every $\omega \in \Omega'$
and every $\tau^n_j(\omega) \in I(\omega)$
\begin{equation}                                                      \label{eq:towards_ito_3:in_proposition} 
\begin{split}
|v(\tau^n_j)|^2 = & |h(0)|^2	
+ 2\sum_{i=1}^m \int_{(0,\tau^n_j]} \langle v_i^{\ast}(s), v^{(2)}_n(s) \rangle\,dA(s) \\
& + 2 \int_{(0,\tau^n_j]} \bar{v}_n(s)\, dh(s)	 + 2(h(0), h(\tau^n_1) - h(0)) \\
& + \sum_{k=0}^{j-1}|h(\tau^n_{k+1})
-h(\tau^n_k)|^2 - |v(\tau^n_1)-h(\tau^n_1)|^2\\
& - \sum_{k=1}^{j-1} |v(\tau^n_{k+1})-v(\tau^n_k)
 - (h(\tau^n_{k+1}) - h(\tau^n_k))|^2, 
\end{split}
\end{equation}
where $\bar v_n(s)=0$ for $s\in[0,\tau^n_1]$ and 
$\bar v_n(s)=v(\tau^n_j)$ for $s\in(\tau^n_j,\tau^n_{j+1}]$ for $j=1,\ldots,N(n)$. 
Moreover, 
\begin{equation}                                                            \label{eq:bnds_3}
\E \sup_{t\in I} |v(t)|^2 < \infty.	
\end{equation}
\end{proposition}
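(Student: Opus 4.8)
\emph{Proof proposal.} The plan is to obtain \eqref{eq:towards_ito_3:in_proposition} as a purely pathwise algebraic identity (valid for $\omega\in\Omega'$ and $\tau^n_j(\omega)\in I(\omega)$, with no integrability used) and then to bootstrap \eqref{eq:bnds_3} from it. For the identity I would fix such $\omega,j$; since $0<\tau^n_1\le\cdots\le\tau^n_j<\infty$, all of $\tau^n_1,\dots,\tau^n_j$ lie in $I(\omega)$, so by Proposition~\ref{propn:1}(1) equation \eqref{eq:1} holds at each of them for every element of $V$, in particular when tested against $\varphi=v(\tau^n_{k+1})\in V$. Telescope $|v(\tau^n_j)|^2=|v(\tau^n_1)|^2+\sum_{k=1}^{j-1}\big(|v(\tau^n_{k+1})|^2-|v(\tau^n_k)|^2\big)$ and apply the elementary identity $|b|^2-|a|^2=2(b-a,b)-|b-a|^2$ in $H$; then substitute the difference of \eqref{eq:1} at $\tau^n_{k+1}$ and at $\tau^n_k$, tested against $v(\tau^n_{k+1})$, for the inner product $(v(\tau^n_{k+1})-v(\tau^n_k),v(\tau^n_{k+1}))$, and rearrange the martingale increments by $2(\Delta_kh,\Delta_kv)-|\Delta_kv|^2=|\Delta_kh|^2-|\Delta_kv-\Delta_kh|^2$, where $\Delta_kx:=x(\tau^n_{k+1})-x(\tau^n_k)$. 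The first interval $(0,\tau^n_1]$ is handled separately: \eqref{eq:1} at $\tau^n_1$ tested against $v(\tau^n_1)$, combined with $|h(\tau^n_1)|^2=|h(0)|^2+2(h(0),h(\tau^n_1)-h(0))+|h(\tau^n_1)-h(0)|^2$, produces exactly the boundary terms $|h(0)|^2+2(h(0),h(\tau^n_1)-h(0))+|h(\tau^n_1)-h(0)|^2-|v(\tau^n_1)-h(\tau^n_1)|^2$. It remains to recognize the sums: since $v^{(2)}_n(s)=v(\tau^n_{k+1})$ on $(\tau^n_k,\tau^n_{k+1}]$ with $\tau^n_0=0$, the $dA$-sums collect into $2\sum_i\int_{(0,\tau^n_j]}\langle v^\ast_i,v^{(2)}_n\rangle\,dA$; since $\bar v_n\equiv 0$ on $[0,\tau^n_1]$ and $\bar v_n(s)=v(\tau^n_k)$ on $(\tau^n_k,\tau^n_{k+1}]$, the sum $\sum_{k=1}^{j-1}(v(\tau^n_k),\Delta_kh)$ is, by the definition of the stochastic integral of an elementary process, $\int_{(0,\tau^n_j]}\bar v_n\,dh$; and the squared increments collect into $\sum_{k=0}^{j-1}|\Delta_kh|^2$. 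Being a finite rearrangement, this requires no integrability.

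For \eqref{eq:bnds_3} I would, for each fixed $n$, first prove by induction on $j\le N(n)$ that $\E|v(\tau^n_j)|^2<\infty$. After the reductions of Section~\ref{sec:proof} one has $A(\infty)\le 1$, $\big(\int_{(0,\infty)}\|v\|_{V_i}^{p_i}\,dA\big)^{1/p_i}\le 1$, $Q_i(\infty)\le 1$, $|h(0)|\le 1$ and $\langle h\rangle_\infty\le 1$; hence $\E\sum_{k=0}^{N(n)}|\Delta_kh|^2=\E\langle h\rangle_\infty\le 1$ and $\E|h(\tau^n_1)-h(0)|^2\le 1$ (optional stopping on the martingales $|h|^2-\langle h\rangle$ and $h$), and by H\"older and Young $\E\int_{(0,\infty)}|\langle v^\ast_i,v^{(2)}_n\rangle|\,dA\le\big(\E\int_{(0,\infty)}\|v^{(2)}_n\|_{V_i}^{p_i}\,dA\big)^{1/p_i}$, which is finite and bounded uniformly in $n$ since $\E\int\|v-v^{(2)}_n\|_{V_i}^{p_i}\,dA\to 0$ by Proposition~\ref{propn:1}(2). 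The only term of \eqref{eq:towards_ito_3:in_proposition} referring to $v(\tau^n_k)$ with $k<j$ is $2\int_{(0,\tau^n_j]}\bar v_n\,dh=2\sum_{k<j}(v(\tau^n_k),\Delta_kh)$, whose $L^1$-norm is at most $2\sum_{k<j}(\E|v(\tau^n_k)|^2)^{1/2}(\E|\Delta_kh|^2)^{1/2}$; so, taking expectations in \eqref{eq:towards_ito_3:in_proposition} and discarding the two non-positive terms, the induction closes (the case $j=1$ being trivial, that integral vanishing). Thus $S_n:=\E\sup_{j\le N(n)}|v(\tau^n_j)|^2\le\sum_{j\le N(n)}\E|v(\tau^n_j)|^2<\infty$.

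Knowing $S_n<\infty$, I would take $\sup_{j\le N(n)}$ in \eqref{eq:towards_ito_3:in_proposition}, discard the non-positive terms and take expectations. The Burkholder--Davis--Gundy inequality, together with the fact that the quadratic variation of $\int_{(0,\cdot]}\bar v_n\,dh$ is at most $\big(\sup_{j\le N(n)}|v(\tau^n_j)|^2\big)[h]_\infty$ and with $\E[h]_\infty=\E\langle h\rangle_\infty\le 1$, bounds $\E\sup_{l}\big|\int_{(0,\tau^n_l]}\bar v_n\,dh\big|$ by $C\,S_n^{1/2}$, while all remaining terms are bounded by a constant $K$ independent of $n$ (using $|h(0)|\le 1$, Cauchy--Schwarz for $2(h(0),h(\tau^n_1)-h(0))$, and the previous paragraph for the $dA$-integral). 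Hence $S_n\le K+2C\,S_n^{1/2}$, and solving this quadratic inequality gives $S_n\le\big(C+\sqrt{C^2+K}\big)^2=:\bar K$, a bound uniform in $n$. Finally, the partitions being nested, $\sup_{t\in I(\omega)}|v(t)|=\lim_{n\uparrow}\sup_{j\le N(n)}|v(\tau^n_j)|$, so monotone convergence yields $\E\sup_{t\in I}|v(t)|^2=\lim_nS_n\le\bar K<\infty$, which is \eqref{eq:bnds_3}. The main obstacle is precisely this last step: one cannot invoke Burkholder--Davis--Gundy to absorb the martingale term into the left-hand side until one knows $S_n<\infty$, which is why the induction of the previous paragraph — exploiting the finiteness of each partition — must come first; and it is the uniform-in-$n$ control of $\E\int\|v^{(2)}_n\|_{V_i}^{p_i}\,dA$ provided by Proposition~\ref{propn:1}(2) that makes $\bar K$ free of $n$, so that the nestedness of the partitions can then deliver \eqref{eq:bnds_3}.
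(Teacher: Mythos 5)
Your derivation of \eqref{eq:towards_ito_3:in_proposition} is exactly the paper's: test \eqref{eq:1} at consecutive partition points against $v(\tau^n_{k+1})$, use $|b|^2-|a|^2=2(b-a,b)-|b-a|^2$ together with the polarization identity for the martingale increments, and handle $(0,\tau^n_1]$ separately via $|h(\tau^n_1)|^2=|h(0)|^2+2(h(0),h(\tau^n_1)-h(0))+|h(\tau^n_1)-h(0)|^2$; as you note, this is a finite pathwise rearrangement needing no integrability. For \eqref{eq:bnds_3} your overall strategy also matches the paper's (first secure \eqref{finite} for each fixed $n$, then absorb the stochastic-integral term into the left-hand side to get a bound uniform in $n$, then pass to the limit over the nested partitions), but both sub-steps are carried out differently. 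The paper does not induct on $j$: it obtains \eqref{finite} from the single-time identity \eqref{eq:towards_ito_1}, which gives $\E|v(\tau^n_j)|^2\le 1+2\sum_i\E\|v(\tau^n_j)\|_{V_i}$, and then bounds $\E\|v(\tau^n_j)\|_{V_i}^{p_i}$ by $2^{p_i}c_i/d_n$ with $d_n$ the minimal mesh of the deterministic partition \eqref{eq r}; your induction, using Cauchy--Schwarz on the increments $(v(\tau^n_k),h(\tau^n_{k+1})-h(\tau^n_k))$ after discarding the two non-positive terms, reaches the same conclusion and avoids the mesh-dependent constant (which the paper can afford there, since only finiteness, not uniformity, is needed at that stage). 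For the absorption the paper uses Young's inequality plus Doob's inequality and $\langle h\rangle\le 1$ to get $2\E\max_j\bigl|\int_{(0,\tau^n_j]}\bar v_n\,dh\bigr|\le 16+\tfrac14\E\max_j|v(\tau^n_j)|^2$, whereas you invoke Burkholder--Davis--Gundy to get a bound of order $S_n^{1/2}$; both close the same self-improving inequality and yield a bound independent of $n$. Your explicit monotone-convergence step over the nested partitions, identifying $\sup_{t\in I}|v(t)|$ as the increasing limit of $\max_{j\le N(n)}|v(\tau^n_j)|$, is correct and is left implicit in the paper.
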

\begin{proof}
Let $\omega \in \Omega'$ and $t,t' \in I(\omega)$ and $t' \geq t$. 
Clearly, 
$$
|v(t')|^2-|v(t)|^2=2(v(t'),v(t')-v(t))-|v(t')-v(t)|^2, 
$$
which by statement (1) of Proposition \ref{propn:1} gives 
\begin{equation*}
\begin{split}
& |v(t')|^2 - |v(t)|^2 \\
& = 2\sum_{i=1}^m \int_{(t,t']} \langle v_i^{\ast}(s), v(t') \rangle \,dA(s) 
+ 2(h(t')-h(t),v(t')) - |v(t')-v(t)|^2 .	
\end{split}
\end{equation*}  
Hence by the identity 
\begin{equation*}
\begin{split}
& 2( h(t')-h(t), v(t')-v(t)) \\
& = - |v(t')-v(t) - (h(t')-h(t))|^2 + |v(t')-v(t)|^2 + |h(t')-h(t)|^2,  	
\end{split}
\end{equation*}
we have 
\begin{equation}                                                                            \label{eq:towards_ito_2}
\begin{split}
|v(t')|^2 & - |v(t)|^2 
= 2\sum_{i=1}^m \int_{(t,t']} \langle v_i^{\ast}(s), v(t') \rangle dA(s) 
+ 2(v(t), h(t')-h(t))\\
& + |h(t')-h(t)|^2 - |v(t')-v(t) - (h(t')-h(t))|^2.  	
\end{split}
\end{equation}
By (1) in Proposition \ref{propn:1} again 
\begin{equation*}
2|v(t)|^2 = 2\sum_{i=1}^m \int_{(0,t]} \langle v_i^{\ast}(s), v(t) \rangle\, dA(s) 
+ 2(h(t),v(t)),
\end{equation*}
which by the identity 
$2(h(t),v(t)) = -|v(t)-h(t)|^2 + |v(t)|^2 + |h(t)|^2$ 
gives 
\begin{equation}                                                                    \label{eq:towards_ito_1}
|v(t)|^2 = 2\sum_{i=1}^m \int_{(0,t]} \langle v_i^{\ast}(s), v(t) \rangle\, \,dA(s) 
+ |h(t)|^2 - |v(t)-h(t)|^2.	
\end{equation}
Summing up for $k=1,\ldots,j-1$ equations \eqref{eq:towards_ito_2} 
with $t'=\tau^n_{k+1}$, $t=\tau^n_{k}$,  
and adding to it equation \eqref{eq:towards_ito_1} with $t=\tau^n_1$, we 
obtain \eqref{eq:towards_ito_3:in_proposition}. 
Form \eqref{eq:towards_ito_3:in_proposition} we have 
\begin{equation*}
\begin{split}
\E \max_{1\leq j \leq N(n)}	|v(\tau^n_j)|^2 
\leq & 2\E|h(0)|^2 
+ 2\E \sum_{i=1}^m \int_{(0,\tau]} |\langle v_i^{\ast}(s), v^{(2)}_n(s) \rangle|\,dA(s)\\
& + 2\E \max_{1\leq j \leq N(n)} 
\left|\int_{(0,\tau^n_j]}\bar{v}_n(s)\,dh(s)\right|\\ 
& + 2\E 
\sum_{k=0}^{N(n)}|h(\tau^n_{k+1}) -h(\tau^n_k)|^2	.
\end{split}
\end{equation*}
Clearly
$$
2\E \max_{1\leq j \leq N(n)} \left|\int_{(0,\tau^n_j]}\bar{v}_n(s)\,dh(s)\right|	
\leq 16 + \frac{1}{16} \E \sup_{t\geq 0} \left|\int_{(0,t]}\bar{v}_n(s)\,dh(s)\right|^2, 	
$$
and by 
Doob's inequality and $\langle h\rangle\leq 1$, 
\begin{equation*}
\E \sup_{t\geq 0} \left|\int_{(0,t]}\bar{v}_n(s)\,dh(s)\right|^2\leq 
4 \E \int_{0}^{\infty} |\bar{v}_n(s)|^2 d\langle h \rangle_s 
\leq4\E\max_{1\leq j \leq N(n)} 	|v(\tau^n_j)|^2. 
\end{equation*}
Since $h$ is a martingale, 
$$
\E 
\sum_{k=0}^{N(n)}|h(\tau^n_{k+1}) -h(\tau^n_k)|^2\leq \E|h(1)|^2
=\E\langle h\rangle(1)\leq1. 
$$
By H\"older's inequality and $\sum_iQ_i\leq1$ we have 
$$
\sum_{i=1}^m\E\int_{(0,\tau]} |\langle v_i^{\ast}(s), v^{(2)}_n(s) \rangle|\,dA(s)
$$
$$
\leq\sum_{i}\sup_{n\geq 1}
\left(\E\int_{(0,\tau]} \|v^{(2)}_n(s)\|_{V_i}^{p_i}\, dA(s)\right)^{\tfrac{1}{p_i}}=:c\,, 
$$
which by virtue of \eqref{eq:lims1:in_proposition} is finite. Hence, taking also 
into account $\E|h(0)|^2\leq1$ we have 
\begin{equation*}
\E \max_{1\leq j \leq N(n)}	|v(\tau^n_j)|^2 
\leq  22+2c+
\frac{1}{4} \E \max_{1\leq j \leq N(n)}	|v(\tau^n_j)|^2,  
\end{equation*}
which immediately yields \eqref{eq:bnds_3}, provided 
\begin{equation}                                               \label{finite}
\E \max_{1\leq j \leq N(n)}	|v(\tau^n_j)|^2<\infty. 
\end{equation}
To show \eqref{finite} 
note that due to~\eqref{eq:towards_ito_1}, 
for every $n\in \N$
and $j=1,\ldots,N(n)+1$,
we get
\begin{equation}                                                  \label{HV}
\begin{split}
\E|v(\tau^n_j)|^2 
\leq & 
\E |h(\tau^n_j)|^2 
+ 2\E \sum_{i=1}^m \int_{(0,\tau^n_j]}\langle v_i^{\ast}(s), v(\tau^n_j)\rangle dA(s)\\
\leq & 
\E |h(0)|^2 
+ 2 \E \sum_{i=1}^m 
Q_i(\tau)
\left(\int_{(0,\tau]} \|v(\tau^n_j)\|_{V_i}^{p_i} dA(s)\right)^{\tfrac{1}{p_i}}\\
\leq & 
1 + 2\sum_i\E \|v(\tau^n_j)\|_{V_i},
\end{split}
\end{equation}
since $\tau \leq 1$ and $r(t) \leq1$ for all $t\in [0,\infty)$.
For $i=1,\ldots,m$ 
\begin{equation*}
\begin{split}
& 
\E \|v(\tau^n_j)\|^{p_i}_{V_i}\leq \E \sup_{s\in [0,\infty)} \|v^{(2)}_n(s)\|_{V_i}^{p_i}
\leq \E \sup_{r\in (0,1]} \|v^{(2)}_n(\beta(r))\|_{V_i}^{p_i}\\
&\leq 2^{p_i-1}\sum_{l=1}^2\E \sup_{r\in (0,1]} \|v(\beta(\kappa^{(l)}_n(r)))\|_{V_i}^{p_i}\\
&\leq 2^{p_i-1} \sum_{l=1}^2
\E \sum_{k=0}^{N(n)}\frac{1}{r^n_{k+1}-r^n_k}
\int_{r^n_k}^{r^n_{k+1}} \|v(\beta(\kappa^{(l)}_n(r)))\|_{V_i}^{p_i}\, dr	\\
& \leq \frac{2^{p_i-1}}{d_n} 
\sum_{l=1}^2 
\E \int_0^1 \|v(\beta(\kappa^{(l)}_n(r)))\|_{V_i}^{p_i}\, dr	
< 2^{p_i}\frac{c_i}{d_n},\\
\end{split}	
\end{equation*}
where $r^n_k$ are given by~\eqref{eq r}, 
$d_n := \min_{k=1,\ldots,N(n)}|r^n_{k+1}-r^n_k|>0$ and 
\begin{equation*}                                                                                    
c_i:=\max_l\sup_n\sum_{i=1}^m\E \int_0^1 
\|v(\beta(\kappa^{(l)}_n(r)))\|_{V_i}^{p_i}\, dr,  	
\end{equation*}
which due to~\eqref{eq:lims_with_beta} is finite. 
Hence by virtue of \eqref{HV} we have \eqref{finite}, 
which completes the proof of \eqref{eq:bnds_3}. 
\end{proof}

We see that due to~\eqref{eq:bnds_3} 
there is $\Omega'' \subset \Omega'$ 
such that $\P(\Omega'') = 1$ and 
\begin{equation*}
\sup_{t\in I(\omega)} |v(t)|^2 < \infty\quad\text{for all $\omega \in \Omega''$}. 
\end{equation*}
Moreover, since $h$ is cadlag, for all $\omega \in \Omega''$ we have 
\begin{equation}                                                  \label{eq:bnds_4}
\sup_{t\in I(\omega)} |v(t)-h(t)|^2 < \infty.	
\end{equation}
Define 
\begin{equation}
\label{eq:def_of_z}
z^{(1)}(t) := \int_{(0,t)} \sum_{i=1}^mv_i^{\ast}(s)\,dA(s),\quad 
z^{(2)}(t) := \int_{(0,t]} \sum_{i=1}^mv_i^{\ast}(s)\,dA(s), 	
\end{equation}
for $t\geq0$, where the integrals are defined as weak* integrals. 
Recall that $v^{\ast}=\sum_iv^{\ast}$ is a $V^{\ast}$-valued 
such that $\langle v^{\ast}(t),\varphi\rangle $ is a progressively measurable process 
for every $\varphi\in V$, and 
$$
\int_{(0,t]}|\langle v^{\ast}(s),\varphi\rangle|\,dA(s)\leq 
\sum_i\int_{(0,t]}|\langle v_i^{\ast}(s),\varphi\rangle|\,dA(s)
$$
$$
\leq \sum_i|\varphi|_{V_i}\int_{(0,t]}\eta_i(s)\,dA(s)
\leq |\varphi|_{V}\sum_i\int_{(0,t]}\eta_i(s)\,dA(s)<\infty. 
$$
Therefore $z^{(1)}$ and $z^{(2)}$ are well-defined $V^{\ast}$-valued 
processes such that  $\langle z^{(1)}, \varphi\rangle$ and 
$\langle z^{(2)}, \varphi\rangle$ are left-continuous 
and right-continuous adapted processes, respectively. 

In what follows we use the notation 
$\Delta^w f(t):=f(t)-\text{w-lim}_{s\nearrow t}f(s)$ for $H$-valued functions 
$f$, when the weak limit from the left exists at $t$. 

\begin{proposition}                                                 \label{propn:2}
Let $z^{(l)}$, $l\in \{1,2\}$ be given by~\eqref{eq:def_of_z}.
\begin{enumerate}
\item If $\omega \in \Omega''$ and $t\in (0,\infty)$ then $z^{(l)}(t)\in H$ 
for  $l\in\{1,2\}$.
Moreover 
\begin{equation*}
\sup_{t\in (0,\infty)} |z^{(l)}(t)| < \infty  \,\,\,\, 
\forall \omega \in \Omega'',\,\,	 l \in \{1,2\}.
\end{equation*}
\item Let $\tilde v$ be given by
\begin{equation*}
\tilde v(t) := \chi_{\Omega''} z^{(2)}(t) + h(t).	
\end{equation*}
Then $\tilde v$ is a $H$-valued adapted and weakly cadlag process 
such that $v(t) = \tilde v(t)$ for all $t\in I(\omega)$ and $\omega \in \Omega''$.
Moreover
\begin{equation*}
\sup_{t\in (0,\infty)} |\tilde v(t)| < \infty  \,\,\,\, 
\forall \omega \in \Omega''.
\end{equation*}
\item If $\omega \in \Omega''$ then for all $t\in (0,\tau(\omega))$
\begin{equation}
\label{eq:Delta_w}
\Delta^w (\tilde v-h)(t) = (\Delta A)(t)\sum_{i=1}^m v_i^{\ast}(t).	
\end{equation}
\end{enumerate}
\end{proposition}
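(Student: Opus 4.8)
The plan is to treat the three parts in order, each building on the previous one. For part (1), the key observation is that on $I(\omega)$ we already know from Proposition \ref{propn:1}(1) that $z^{(2)}(t) = v(t) - h(t)$, so $z^{(2)}(t)\in H$ at those times, and the bound \eqref{eq:bnds_4} gives $\sup_{t\in I(\omega)}|z^{(2)}(t)|<\infty$. To pass from $I(\omega)$ to all $t\in(0,\infty)$ I would use the last two clauses of Proposition \ref{propn:1}(1): if $\Delta A(t)>0$ then $t\in I(\omega)$, and if $(s,t]\cap I(\omega)=\emptyset$ then $A$ is constant on $(s,t]$. Hence for arbitrary $t$, either $t\in I(\omega)$ (done), or $t$ is a continuity point of $A$ that can be approached from the right by points of $I(\omega)$ (or $A$ is eventually constant past $t$), and since $\langle z^{(2)},\varphi\rangle$ is right-continuous and $A$ does not charge the gap, $z^{(2)}(t)$ equals $z^{(2)}(t')$ for a nearby $t'\in I(\omega)$ up to a term that vanishes because $dA$ gives no mass there; in particular $z^{(2)}(t)\in H$ with the same uniform bound. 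The same argument handles $z^{(1)}$ via left-continuity (using $A(t-)$). This yields $\sup_{t}|z^{(l)}(t)|<\infty$ on $\Omega''$.

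For part (2), adaptedness of $\tilde v$ is immediate since $\chi_{\Omega''}$ is $\F_0$-measurable (as $\P(\Omega'')=1$ and $\F_0$ is complete), $z^{(2)}$ is adapted, and $h$ is adapted. The identity $\tilde v(t)=v(t)$ on $I(\omega)$ for $\omega\in\Omega''$ is exactly Proposition \ref{propn:1}(1) rewritten, and the uniform bound follows from part (1) together with $\sup_t|h(t)|<\infty$ on a full set (which we may fold into $\Omega''$, shrinking it if necessary, since $h$ is cadlag hence locally bounded and $\langle h\rangle$ is bounded). Weak cadlag-ness of $\tilde v$ reduces to showing that for each $\varphi\in V$ the scalar process $\langle z^{(2)}(t),\varphi\rangle + (h(t),\varphi)$ has left limits and is right-continuous; the second summand is cadlag because $h$ is, and the first is right-continuous with left limits because it is an ordinary Lebesgue--Stieltjes integral of a $dA$-integrable integrand against a cadlag increasing $A$. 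Since $V$ is dense in $H$ and $\tilde v$ is uniformly bounded in $H$ on $\Omega''$, testing against a countable dense subset of $V$ suffices to conclude that $\tilde v$ is weakly cadlag as an $H$-valued process.

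For part (3), fix $\omega\in\Omega''$ and $t\in(0,\tau(\omega))$. By definition $\tilde v - h = \chi_{\Omega''}z^{(2)}$, so $\Delta^w(\tilde v-h)(t) = z^{(2)}(t) - \operatorname*{w-lim}_{s\nearrow t}z^{(2)}(s) = z^{(2)}(t) - z^{(1)}(t)$, provided the weak left limit of $z^{(2)}$ exists and equals $z^{(1)}(t)$. To see this, note that for each $\varphi\in V$, $\langle z^{(2)}(s),\varphi\rangle = \int_{(0,s]}\langle v^\ast(r),\varphi\rangle\,dA(r) \to \int_{(0,t)}\langle v^\ast(r),\varphi\rangle\,dA(r) = \langle z^{(1)}(t),\varphi\rangle$ as $s\nearrow t$, by dominated convergence using the $dA$-integrable bound $\sum_i|\varphi|_{V_i}\eta_i$ established just before the proposition; combined with the uniform $H$-bound from part (1), the weak limit in $H$ exists and equals $z^{(1)}(t)$. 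Finally $z^{(2)}(t) - z^{(1)}(t) = \int_{\{t\}}\sum_i v_i^\ast(r)\,dA(r) = (\Delta A)(t)\sum_{i=1}^m v_i^\ast(t)$, which is the claimed formula. The main obstacle here is the bookkeeping in part (1): one must argue carefully that $z^{(2)}(t)\in H$ for \emph{every} $t$, not just $t\in I(\omega)$, and the ``no mass in the gaps'' clauses of Proposition \ref{propn:1}(1) are exactly what make this work, so the delicate point is invoking them correctly to transfer both membership in $H$ and the uniform bound off the random grid.
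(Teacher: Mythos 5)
Your overall strategy is the paper's: use \eqref{eq:1} on the random grid $I(\omega)$ together with the bound \eqref{eq:bnds_4} and weak sequential compactness of bounded sets in $H$ to transfer membership in $H$ and the uniform bound off the grid, then obtain adaptedness by testing against $\varphi\in V$ (Pettis theorem plus separability of $H$) and the jump identity by computing the jump of $\langle z^{(2)}(\cdot),\varphi\rangle$. Parts (2) and (3) are essentially correct as written and match the paper's proof.

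There is, however, a genuine gap in your case analysis for part (1). You propose to reach an arbitrary $t\notin I(\omega)$ either from the right by points of $I(\omega)$, or through a gap on which $dA$ gives no mass. This misses the points $t\notin I(\omega)$ that are accumulation points of $I(\omega)$ from the left but not from the right: for such $t$ there is no sequence in $I(\omega)$ decreasing to $t$, and there is no $s<t$ with $(s,t]\cap I(\omega)=\emptyset$, so the clause $A(s)=A(t)$ is unavailable. Moreover, even in the right-approach case, the assertion that ``$z^{(2)}(t)$ equals $z^{(2)}(t')$ for a nearby $t'\in I(\omega)$ up to a term that vanishes because $dA$ gives no mass there'' is not literally true: if $t'\in I(\omega)$ and $t'>t$, then $(t,t']$ meets $I(\omega)$ and $A$ may increase on it, so $z^{(2)}(t')-z^{(2)}(t)=\int_{(t,t']}v^{\ast}\,dA$ need not be small for any fixed $t'$; one must let $t'\searrow t$ and combine the uniform bound on $|z^{(2)}(t')|$ with weak compactness and identification of the limit by testing against the dense set $V$. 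The paper instead approximates from the \emph{left}: for $t\in\bar I^{l}(\omega)\setminus I(\omega)$ (left-closure of $I(\omega)$ minus $I(\omega)$) one has $\Delta A(t)=0$, since $\Delta A(t)>0$ would force $t\in I(\omega)$; one then picks $t_{n}\nearrow t$ in $I(\omega)$, extracts a weakly convergent subsequence of $v(t_{n})-h(t_{n})$ using \eqref{eq:bnds_4}, and identifies the weak limit with $z^{(2)}(t)$ because $\int_{(0,t_{n}]}\to\int_{(0,t)}=\int_{(0,t]}$. The remaining $t$ are handled by the gap clause, which reduces them to a point of $\{0\}\cup\bar I^{l}(\omega)$. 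You in fact carry out exactly this left-limit argument when you identify $\text{w-lim}_{s\nearrow t}z^{(2)}(s)=z^{(1)}(t)$ in part (3), so the repair is simply to run that argument in part (1) with the correct trichotomy $t\in I(\omega)$, $t\in\bar I^{l}(\omega)\setminus I(\omega)$, $t\notin\bar I^{l}(\omega)$.
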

\begin{proof}
Fix $\omega \in \Omega''$. 
If $t\in I(\omega)$ then for all $\varphi \in V$ 
\begin{equation*}
\left(v(t) - h(t), \varphi\right) 
= \sum_{i=1}^m\int_{(0,t]} \langle v_i^{\ast}(s),\varphi \rangle\,dA(s), 	
\end{equation*}
and hence $z^{(2)}(t) \in H$.
Consider now the situation when $t\in (0,\tau(\omega)] \setminus I(\omega)$. 
Let $\bar{I}^l(\omega)$ denote the left-closure of the set $I(\omega)$.
If $t\in \bar{I}^l(\omega)\setminus I(\omega)$ then $\Delta A(t)=0$ 
by Proposition \ref{propn:1}, and there is a sequence 
$(t_n)_{n\in \N} \subset I(\omega)$ such that $t_n \nearrow t$. Moreover, 
due to~\eqref{eq:bnds_4}
there is a subsequence $t_{n'}\nearrow t$  such that 
$v(t_{n'}) - h(t_{n'})$ converges weakly in $H$ to some $\xi\in H$.
Hence 
for all $\varphi \in V$
\begin{equation*}
\begin{split}
(\xi, \varphi) & = \lim_{n'\to \infty} (v(t_{n'}) - h(t_{n'}),\varphi)\\
& = \lim_{n'\to \infty} \sum_{i=1}^m \int_{(0,t_{n'}]} 
\langle v_i^{\ast}(s), \varphi	\rangle\,dA(s) 
= \sum_{i=1}^m \int_{(0,t)} \langle v_i^{\ast}(s), \varphi	\rangle\,dA(s)	 \\
& = \sum_{i=1}^m \int_{(0,t]} \langle v_i^{\ast}(s), \varphi	\rangle\,dA(s), 
\end{split}
\end{equation*}
which implies $z^{(2)}(t)=\xi \in H$.
If $t\in (0,\infty)\setminus\bar{I}^l(\omega)$, then 
there is $s \in \{0\}\cup\bar{I}^l(\omega)$ such that
$s<t$ and $(s,t] \cap I(\omega) = \emptyset$. 
So $\int_{(s,t]} v_i^{\ast}(s)\,dA(s) = 0$ and $z^{(2)}(t) = z^{(2)}(s) \in H$.
Of course if $t=0$ then $z^{(2)}(t) = 0 \in H$.
Finally, due to~\eqref{eq:bnds_4}, 
\begin{equation}
\label{eq:bnds_5}
\sup_{t\in (0,\infty)}|z^{(2)}(t)|^2 
= \sup_{t\in (0,\infty)}|v(t)-h(t)|^2 < \infty.	
\end{equation}
Now we consider $z^{(1)}(t)$ for $t\in (0,\infty)$. 
Take $(t_n)_{n\in \N}$ such that $t_n < t$ and $t_n \nearrow t$ as $n\to \infty$.
From~\eqref{eq:bnds_5} we know that $\sup_{n\in \N} |z^{(2)}(t_n)|^2 < \infty$
and so there is a subsequence $t_{n'}\nearrow t$ such that 
$z^{(2)}(t_n)$ converges weakly in $H$ to some $\xi\in H$. 
Thus for any $\varphi \in V$ 
\begin{equation*}
\begin{split}
& (\xi, \varphi) = \lim_{n'\to \infty} (z^{(2)}(t_{n'}),\varphi)\\
&  =\lim_{n'\to \infty} \sum_{i=1}^m \int_{(0,t_{n'}]} \langle v_i^{\ast}(s),\varphi \rangle\,dA(s)
= \sum_{i=1}^m \int_{(0,t_{n'})} \langle v_i^{\ast}(s),\varphi \rangle\,dA(s) 
= \langle z^{(1)}(t), \varphi \rangle.		
\end{split}
\end{equation*}
Hence $z^{(1)}(t) = \xi \in H$, and due to~\eqref{eq:bnds_5} 
\begin{equation*}
\sup_{t\in (0,\infty)}|z^{(1)}(t)|^2 \leq \sup_{t\in (0,\infty)}|z^{(2)}(t)|^2 < \infty.
\end{equation*}
By construction $\tilde v$ is weakly cadlag.
Due to~\eqref{eq:bnds_5} for $\omega \in \Omega''$
\begin{equation*}
\sup_{t\in (0,\infty)} |\tilde v(t)|^2 
\leq \sup_{t\in (0,\infty)} |	z^{(2)}(t)|^2 + \sup_{t\in (0,\infty)} |	h(t)|^2 
< \infty.
\end{equation*}
We note that for any $\varphi \in V$ the real valued random variable
\begin{equation*}
(\tilde v(t),\varphi) 
= \chi_{\Omega''} \sum_{i=1}^m 	
\int_{(0,t]} \langle v_i^{\ast}(s), \varphi \rangle\,dA(s)
+ (h(t),\varphi)
\end{equation*}
is $\F_t$-measurable.  Hence, since $H$ is separable, 
$\tilde v(t)$ 
is $\F_t$-measurable by the Pettis theorem. 
Finally notice that 
$$
\Delta ((\tilde v-h)(t),\varphi) = \sum_{i=1}^m \langle v_i^{\ast}(t),\varphi\rangle (\Delta A)(t)
$$
for all $\varphi\in V$ and $\omega\in\Omega''$. Hence on $\Omega''$ 
$$
\Delta^w (\tilde v-h)(t)= \sum_{i=1}^m v_i^{\ast}(t)(\Delta A)(t)\,.
$$  
\end{proof}

Let
\begin{equation*}
\tilde v_n(t) := \tilde v(\tau^n_j) \,\,\, \textrm{and}\,\,\,\, 
h_n(t) := h(\tau^n_j)\,\,\,\, \textrm{for}\,\,\,\, t\in (\tau^n_j,\tau^n_{j+1}],
\,\,\,\, j = 0,1,\ldots,N(n).	
\end{equation*}
Then from~\eqref{eq:towards_ito_3:in_proposition} it follows that 
for every $\omega \in \Omega''$ and $t:=\tau^n_j(\omega) \in I(\omega)$
\begin{equation}                                      \label{eq:towards_ito_4} 
\begin{split}
|\tilde v(t)|^2 = & |h(0)|^2	
+ 2\sum_{i=1}^m \int_{(0,t]} \langle v_i^{\ast}(s), v^{(2)}_n(s) \rangle\,dA(s) \\
& + 2 \int_{(0,t]} (\tilde v_n(s), dh(s)) 
+ \sum_{k=0}^{j-1}|h(\tau^n_{k+1})-h(\tau^n_k)|^2-K_n(t),
\end{split}
\end{equation}
where
$$ 
K_n(t):= \sum_{k:\tau^n_{k+1}\leq t}^{j-1} |\tilde v(\tau^n_{k+1})-\tilde v(\tau^n_k)
 - (h(\tau^n_{k+1}) - h(\tau^n_k))|^2.
$$
In order to let $n\to\infty$ in the above equation we first rewrite it 
as 
\begin{equation}                                                                 \label{eq:towards_ito_5} 
\begin{split}
|\tilde v(t)|^2 = &  2\sum_{i=1}^m \int_{(0,t]} \langle v_i^{\ast}(s), v^{(2)}_n(s) \rangle\, dA(s) \\
& + 2 \int_{(0,t]} (\tilde v_n(s)-h_n(s), dh(s)) 
+ |h(t)|^2-K_n(t) \\
\end{split}
\end{equation}
by noticing that 
\begin{equation*}
2\int_{(0,\tau^n_j]} (h_n(s), dh(s)) 
=|h(\tau^n_j)|^2 - |h(0)|^2 - \sum_{k=0}^{j-1} |h(\tau^n_{k+1}) - h(\tau^n_j)|^2.  	
\end{equation*}

To perform the limit procedure we use the following two 
propositions. 
\begin{proposition}
\label{prop:4}
There is $\tilde\Omega \subset \Omega''$ with 
$\P(\tilde\Omega) = 1$ such that for a subsequence $n'$ 
and for every $\omega \in \tilde\Omega$ 
\begin{equation*}
\begin{split}
& \int_{(0,\infty)}
\|v(s) - v^{(l)}_{n'}(s)\|_{V_i}^{p_i}\,dA(s) \to 0\,\,\,\, (l=1,2),\\
&  \int_{(0,\infty)}
\|w_k(s) - w^{(l)}_{kn'}(s)\|^{p_i}_{V_i}\,dA(s) \to 0\,\,\,\, (l=1,2; k\in \N),\\
& \sup_{t\in (0,\infty)}\left|\int_{(0,t]} (\tilde v_{n'}(s) - h_{n'}(s),dh(s))  
- \int_{(0,t]} (\tilde v(s-)-h(s-),dh(s))\right| \to 0
\end{split}		
\end{equation*}
as $n'\to \infty$. Moreover, 
$$
K_{n'}(t)\to \int_{(0,t]}|v^{\ast}(s)|^2 \Delta A(s)\,dA(s) \quad\text{for $t\in I(\omega)$ and 
$\omega\in\tilde\Omega$}. 	
$$
\end{proposition}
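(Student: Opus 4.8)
The plan is to establish the four convergences one at a time, extracting a single subsequence $n'$ along the way by a diagonal argument, and to reserve the real work for the last one. The first two displays are immediate: each of the countably many sequences of nonnegative random variables $\int_{(0,\infty)}\|v(s)-v^{(l)}_n(s)\|_{V_i}^{p_i}\,dA(s)$ (over $l\in\{1,2\}$, $i=1,\dots,m$) and $\int_{(0,\infty)}\|w_k(s)-w^{(l)}_{kn}(s)\|_{V_i}^{p_i}\,dA(s)$ (over $l$, $i$, $k\in\N$) tends to $0$ in $L^1(\P)$ by~\eqref{eq:lims1:in_proposition}, hence along a subsequence $\P$-a.s.; a Cantor diagonalisation produces one subsequence, still denoted $n'$, along which all of them converge to $0$ almost surely.

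\textbf{The stochastic integral.} Here I would apply Lemma~\ref{lemma:3} to $\xi_n(s):=\big(\tilde v_n(s)-h_n(s)\big)-\big(\tilde v(s-)-h(s-)\big)$, since the integrand in the difference of the two stochastic integrals is $(\xi_n(s),dh(s))$. On $\Omega''$ one has $\tilde v-h=z^{(2)}$ and $\tilde v(\cdot-)-h(\cdot-)=z^{(1)}$; since $\langle z^{(1)},\varphi\rangle$ is left-continuous and adapted, while $\tilde v_n-h_n=\sum_j z^{(2)}(\tau^n_j)\mathbf 1_{\rrbracket\tau^n_j,\tau^n_{j+1}\rrbracket}$ with $z^{(2)}(\tau^n_j)$ being $\F_{\tau^n_j}$-measurable, each $\xi_n$ is an $H$-valued predictable process. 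The hypotheses of Lemma~\ref{lemma:3} are then checked: $\sup_{n,s}|\xi_n(s)|<\infty$ a.s.\ follows from $\sup_s|\tilde v(s)|<\infty$, $\sup_s|z^{(1)}(s)|<\infty$ (Proposition~\ref{propn:2}) and $h$ being cadlag; and $(\xi_n(s),\varphi)\to0$ for each $s$ and $\varphi\in H$ follows because, with $s\in\rrbracket\tau^n_{j_n},\tau^n_{j_n+1}\rrbracket$, the stopping times $\tau^n_{j_n}$ increase to some $s'\le s$, Proposition~\ref{propn:1}(1) forces $A$, hence $z^{(2)}$, to be flat on $(s',s)$, and the weak-limit descriptions of $z^{(2)}(s')$ and $z^{(1)}(s)$ from the proof of Proposition~\ref{propn:2} give $(\tilde v_n(s)-h_n(s),\varphi)\to(\tilde v(s-)-h(s-),\varphi)$ first for $\varphi\in V$ and then for $\varphi\in H$ by density and the uniform bound, while $h(\tau^n_{j_n})\to h(s-)$ since $h$ is cadlag. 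Lemma~\ref{lemma:3} then gives convergence in probability, uniformly in $t$, and passing to a further subsequence of $n'$ gives the almost sure statement.

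\textbf{The quadratic term $K_n(t)$.} Fix $\omega\in\Omega''$ and $t\in I(\omega)$. For $n$ large enough that $t$ is a partition point, $\tilde v(\tau^n_{k+1})-\tilde v(\tau^n_k)-(h(\tau^n_{k+1})-h(\tau^n_k))=\Delta_k z^{(2)}:=z^{(2)}(\tau^n_{k+1})-z^{(2)}(\tau^n_k)$, so $K_n(t)=\sum_{k:\tau^n_{k+1}\le t}|\Delta_k z^{(2)}|^2=\sum_l K_n^l(t)$ with $K_n^l(t):=\sum_{k:\tau^n_{k+1}\le t}|\langle\Delta_k z^{(2)},e^l\rangle|^2$. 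For each $l$ the real process $x^l(s):=\langle v^*(s),e^l\rangle$ is $dA$-locally integrable, its $dA$-integral is $s\mapsto(z^{(2)}(s),e^l)$, and its jump at $s$ equals $x^l(s)\Delta A(s)$ by Proposition~\ref{propn:2}; hence Lemma~\ref{lemma:1}(iv) gives $K^l_{n'}(t)\to\int_{(0,t]}|\langle v^*(s),e^l\rangle|^2\Delta A(s)\,dA(s)=:L^l(t)$. Summing over $l\le N$, letting $n'\to\infty$ and then $N\to\infty$, and using that $v^*(s)\in H$ whenever $\Delta A(s)>0$, gives $\liminf_{n'}K_{n'}(t)\ge\int_{(0,t]}|v^*(s)|^2\Delta A(s)\,dA(s)=:L(t)$; meanwhile, passing to the limit in~\eqref{eq:towards_ito_5} (using the first two displays via H\"older's inequality with $\|v^*_i\|_{V_i^*}\le\eta_i$ and~\eqref{assumption main}, together with the stochastic-integral convergence just proved) shows that $K_{n'}(t)$ does converge.

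\textbf{Main obstacle.} The genuine difficulty is to match these two, i.e.\ to prove $\limsup_{n'}K_{n'}(t)\le L(t)$, equivalently that the coordinate-tails $\sum_{l>N}K^l_{n'}(t)$ are negligible uniformly in $n'$. I expect to handle this by splitting $z^{(2)}=z^c+z^d$ into its $A$-continuous and pure-jump parts — legitimate because Proposition~\ref{propn:2} pins the jumps of $z^{(2)}$ to the jumps of $A$ — treating the pure-jump contribution directly and showing the continuous contribution is asymptotically negligible along the time change of Lemma~\ref{lemma:1}(ii)--(iii), with the uniform bound $\sup_n\E\,K_n(\tau)<\infty$ (obtained by taking expectations in~\eqref{eq:towards_ito_5}, where the martingale term has zero mean) used to control the error. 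This is the one step that does not reduce to a routine application of the preliminaries.
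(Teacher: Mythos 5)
Your treatment of the first three convergences is sound and essentially the paper's: $L^1$-convergence from \eqref{eq:lims1:in_proposition} plus diagonalisation for the first two displays, and Lemma~\ref{lemma:3} applied to $\xi_n=(\tilde v_n-h_n)-(\tilde v(\cdot-)-h(\cdot-))$ for the third (the paper verifies the pointwise weak convergence more directly, writing $(\xi(t)-\xi_n(t),\varphi)=\sum_i\int_{(\tau^n_j,t)}\langle v_i^{\ast}(s),\varphi\rangle\,dA(s)$ and bounding it by $\sum_i\|\varphi\|_{V_i}|r^n_{j+1}-r^n_j|^{1/p_i}$ via H\"older and Lemma~\ref{lemma:1}(iii), which is cleaner than your case analysis but amounts to the same thing). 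The lower bound $\varliminf K_{n'}(t)\geq\int_{(0,t]}|v^{\ast}|^2\Delta A\,dA$ via Parseval, Fatou and Lemma~\ref{lemma:1}(iv) is also exactly the paper's argument, as is the observation that $K_{n'}(t)$ converges because every other term in \eqref{eq:towards_ito_5} does.

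However, the upper bound $\varlimsup K_{n'}(t)\leq\int_{(0,t]}|v^{\ast}|^2\Delta A\,dA$ — which you correctly identify as the crux and which occupies most of the paper's proof — is left as a sketch that does not work as stated. Your proposed splitting $z^{(2)}=z^c+z^d$ is already problematic: the pure-jump part $z^d(t)=\sum_{s\leq t}v^{\ast}(s)\Delta A(s)$ is not known to be a well-defined $H$-valued process of bounded variation (only $\sum_s|v^{\ast}(s)|^2|\Delta A(s)|^2<\infty$ is in play, and even that is part of what is being proved), and showing the continuous contribution is negligible coordinate-wise runs into precisely the same interchange of $\lim_{n}$ with $\sum_{l}$ that Fatou resolves only in the favourable direction; the bound $\sup_n\E K_n(\tau)<\infty$ gives no uniform control of the coordinate tails. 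The paper's actual mechanism is different: write each increment as $x+y$ with $x=\int_{(\tau^n_j,\tau^n_{j+1})}v^{\ast}\,dA$ (open interval) and $y=v^{\ast}(\tau^n_{j+1})\Delta A(\tau^n_{j+1})$, use $|x+y|^2=|y|^2+2(x,x+y)-|x|^2$, rewrite $x+y=\Delta_j\tilde v-\Delta_j h$, convert the cross term with $\Delta_j\tilde v$ into the duality pairing $2\sum_i\int\langle v_i^{\ast},v_n^{(2)}-v_n^{(1)}\rangle\,dA\to0$ (first display), and handle the cross term with $\Delta_j h$ by projecting onto $\Pi_k H\subset V$ — the projected part is $2\sum_i\int\langle v_i^{\ast},w_{kn}^{(2)}-w_{kn}^{(1)}\rangle\,dA\to0$ (this is exactly why the $w_k$ convergences appear in the statement, and your sketch never uses them), while the orthogonal remainder is absorbed into the spare $-|x|^2$ plus $\sum_j|(I-\Pi_k)\Delta_j h|^2$, whose expectation is at most $\E|(I-\Pi_k)h(1)|^2\to0$ as $k\to\infty$. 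Without this (or an equivalent device for pairing the $V^{\ast}$-valued open-interval integrals against the non-$V$-valued martingale increments), the proof is incomplete.
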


\begin{proof}
Set $\xi(t) := \tilde v(t-) - h(t-)$ and $\xi_n(t):=\tilde v_n(t)-h_n(t)$. 
By Lemma~\ref{lemma:3}, taking into account that 
by Proposition~\ref{propn:2} on $\Omega''$ 
$$
\sup_n\sup_{t\in (0,\infty)} |\xi(t) - \xi_n(t)| \leq \sup_{t\in (0,\infty)}|z^{(1)}(t)| < \infty,  
$$
and that  $V$ is dense in $H$, we have 
$$
\sup_{t\geq0}\left|\int_{(0,t]}(\xi(s)-\xi_n(s),dh(s))\right|\to 0\quad
\text{in probability as $n\to\infty$}, 
$$
if we show that  
almost surely 
$$
\lim_{n\to\infty}(\xi(t)-\xi_n(t),\varphi)=0 \quad
\text{for all $t>0$ and $\varphi\in V$}.
$$ 
 To this end set 
$$
v_i^{\ast}:=\int_{(\tau^n_j,t)} v_i^{\ast}(s)\,dA(s)\in V_i^{\ast}. 
$$
Then for all $\omega\in\Omega''$, $t>0$ and $\varphi\in V$ 
\begin{equation*}
\begin{split}
& (\xi(t) - \xi_n(t),\varphi)=\langle \xi(t) - \xi_n(t),\varphi\rangle
 = \bigg\langle \sum_{i=1}^m v_i^{\ast},\varphi\bigg\rangle 
= \sum_{i=1}^m \langle v_i^{\ast},\varphi\rangle\\
 &=  \sum_{i=1}^m \int_{(\tau^n_j,t)} \langle v_i^{\ast}(s),\varphi\rangle\,dA(s) 
\leq \sum_{i=1}^{m}\|\varphi\|_{V_i} \int_{(\tau^n_j,t)} \|v_i^{\ast}(s)\|_{V_i^*}\,dA(s) 	\\
& \leq \max_{j=1,\ldots,N(n)} \sum_{i=1}^{m}\|\varphi\|_{V_i}
\left(A(\tau^n_{j+1})-A(\tau^n_j)\right)^{\frac{1}{p_i}} 
Q_i(\tau^n_{j+1})\\
& \leq \max_{j=1,\ldots,N(n)}\sum_{i=1}^{m}\|\varphi\|_{V_i}
\left|r^n_{j+1} - r^n_j\right|^{\frac{1}{p_i}} \to 0
\,\,\, \textrm{as}\,\,\, n \to \infty,
\end{split}
\end{equation*}
with $r^n_j$ given by~\eqref{eq r}.
Consequently, taking also into account \eqref{eq:lims1:in_proposition} 
of Proposition \ref{propn:1} we have $\Omega'''\subset\Omega''$ and 
a subsequence $n'$ 
such that the first three limits are zero for $\omega\in\Omega'''$. 
Taking the limit along the subsequence $n'$ in \eqref{eq:towards_ito_5}
we see that $K_{n'}(t)$ converges for $\omega\in\Omega'''$ and $t\in I(\omega)$ 
to some $K(t)$, and 
\begin{equation*}                                                                 
\begin{split}
|\tilde v(t)|^2 = &  2\sum_{i=1}^m \int_{(0,t]} \langle v_i^{\ast}(s), v(s) \rangle\, dA(s) \\
& + 2 \int_{(0,t]} (\tilde v(s)-h(s), dh(s)) 
+ |h(t)|^2-K(t). \\
\end{split}
\end{equation*}
From this point onwards we will always consider only the subsequence 
$n'$ but we will keep writing $n$ to ease notation.
Our task is now to identify $K(t)$. 
We note that, using Parseval's identity, 
\begin{equation*}
\begin{split}
K_n(t) & =
\sum_{0\leq \tau^n_{j+1} \leq t} 
\left|\sum_i\int_{(\tau^n_j,\tau^n_{j+1}]} v_i^{\ast}(s)\,dA(s) \right|^2	\\
& = \sum_{0\leq \tau^n_{j+1} \leq t}  \sum_{k\in \N} 
\left(\sum_i\int_{(\tau^n_j,\tau^n_{j+1}]} v_i^{\ast}(s)\,dA(s), e_k \right)^2\\
& = \sum_{0\leq \tau^n_{j+1} \leq t}  \sum_{k\in \N} 
\left\langle\sum_i\int_{(\tau^n_j,\tau^n_{j+1}]} v_i^{\ast}(s)\,dA(s), e_k \right\rangle^2\\
& = \sum_{0\leq \tau^n_{j+1} \leq t}  \sum_{k\in \N} 
\left|\int_{(\tau^n_j,\tau^n_{j+1}]}\sum_i\langle v_i^{\ast}(s), e_k\rangle\,dA(s) \right|^2
.
\end{split}	
\end{equation*}
Hence, using Lemma \ref{lemma:1}, Parseval's identity and~\eqref{eq:Delta_w}, we get
\begin{align}
K(t) & = \lim_{n\to \infty} K_n(t)  
\geq 
\sum_{k\in \N}   \varliminf_{n\to \infty} \sum_{0\leq \tau^n_{j+1} \leq t}
\left|\int_{(\tau^n_j,\tau^n_{j+1}]} \sum_i\langle v_i^{\ast}(s), e_k \rangle\,dA(s) \right|	^2\nonumber\\
& = \sum_{k\in \N} \sum_{s\leq t} 
\left| \sum_i\langle v_i^{\ast}(s),  e_k \rangle \Delta A(s) \right|^2
= \sum_{k\in \N} \sum_{s\leq t} 
\left| ( \Delta^w (\tilde v-h)(s),  e_k ) \right|^2                                                    \nonumber\\ 
& = \sum_{s\leq t} \left|\sum_iv_i^{\ast}(s)\right|^2 |\Delta A(s)|^2.                       \label{Knbelow}
\end{align}	
To obtain an upper bound we use first the identity  
$$
|x+y|^2= y^2 +2x(y+x)-x^2
$$
 together with the definition of $g$
to get 
\begin{equation}                                                              \label{Kn}
\begin{split}
K_n(t) & = \sum_{0\leq \tau^n_{j+1} \leq t} 
\left|\sum_i\int_{(\tau^n_j, \tau^n_{j+1})} v_i^{\ast}(s)\,dA(s) 
+ \sum_iv_i^{\ast}(\tau^n_{j+1}) \Delta A(\tau^n_{j+1})\right|^2\\
& = \sum_{0\leq \tau^n_{j+1} \leq t} (J^{(1)}_j+J^{(2)}_j-J^{(3)}_j)
\end{split}
\end{equation}
with 
\begin{equation*}
\begin{split}
J^{(1)}_j:=&\left|\sum_iv_i^{\ast}(\tau^n_{j+1})\right|^2 |\Delta A(\tau^n_{j+1})|^2, 
\\
J^{(2)}_j:=&2 \left(\sum_i
\int_{(\tau^n_j, \tau^n_{j+1})} v_i^{\ast}(s)\,dA(s), \tilde v(\tau^n_{j+1})-\tilde v(\tau^n_j) 
- (h(\tau^n_{j+1})-h(\tau^n_j)) \right), \\
J^{(3)}_j:=&\left|\sum_i\int_{(\tau^n_j, \tau^n_{j+1})} v_i^{\ast}(s)\,dA(s)\right|^2
\end{split}	
\end{equation*}
For $j\neq0$ we split  $J^{(2)}_j=J^{(21)}_j-J^{(22)}_j$ with 
$$
J^{(21)}_j:=2\left(\sum_i\int_{(\tau^n_j, \tau^n_{j+1})} v_i^{\ast}(s)\,dA(s), \tilde v(\tau^n_{j+1})-\tilde v(\tau^n_j)\right), 
$$
$$
J^{(22)}_j:=2\left(\sum_i
\int_{(\tau^n_j, \tau^n_{j+1})} v_i^{\ast}(s)\,dA(s), h(\tau^n_{j+1})-h(\tau^n_j) \right), 
$$
and notice that 
$$
J^{(21)}_j=2\sum_i\int_{(\tau^n_j, \tau^n_{j+1})}\langle v_i^{\ast}(s),v(\tau^n_{j+1})-v(\tau^n_j)\rangle
\,dA(s)
$$
$$
=2\sum_i\int_{[\tau^n_j, \tau^n_{j+1})}\langle v_i^{\ast}(s),v_n^{(2)}(s)-v_n^{(1)}(s)\rangle
\,dA(s). 
$$
Using $\Pi_k$, the orthogonal projection of $H$ onto 
the space spanned by $(e_j)_{j=1}^k\subset V$, we have 
$$
J^{(22)}_j=J^{(22)}_{jk}+\bar J^{(22)}_{jk}
$$
with 
$$
J^{(22)}_{jk}:=2\left(\sum_i
\int_{(\tau^n_j, \tau^n_{j+1})} v_i^{\ast}(s)\,dA(s), \Pi_k(h(\tau^n_{j+1})-h(\tau^n_j)) \right), 
$$
$$
\bar J^{(22)}_{jk}:=2\left(\sum_i
\int_{(\tau^n_j, \tau^n_{j+1})} v_i^{\ast}(s)\,dA(s), (I-\Pi_k)(h(\tau^n_{j+1})-h(\tau^n_j)) \right).  
$$
Notice that 
$$
J^{(22)}_{jk}=2\sum_i
\int_{(\tau^n_j, \tau^n_{j+1})} 
\langle v_i^{\ast}(s),\Pi_k(h(\tau^n_{j+1})-h(\tau^n_j)) \rangle\,dA(s),
$$
$$
=2\sum_i
\int_{[\tau^n_j, \tau^n_{j+1})} 
\langle v_i^{\ast}(s),w^{(2)}_{kn}(s)-w^{(1)}_{kn}(s)\rangle\,dA(s),  
$$
and 
$$
\bar J^{(22)}_{jk}\leq J_j^{(3)}+\left|(I-\Pi_k)(h(\tau^n_{j+1})-h(\tau^n_j)) \right|^2. 
$$
Similarly, taking into account $\tilde v(0)=h(0)$, for  $J^{(2)}_0$ we have 
$$
J^{(2)}_0=2\left(\sum_i
\int_{(0, \tau^n_{1})} v_i^{\ast}(s)\,dA(s), \tilde v(\tau^n_{1})-h(\tau^n_{1}) \right)
=J^{(21)}_0-J^{(22)}_0,
$$
where 
$$
J^{(21)}_0:=2\sum_i\int_{(0, \tau^n_{1})}\langle v_i^{\ast}(s), \tilde v(\tau^n_{1})\rangle\,dA(s)
$$
$$
=2\sum_i\int_{(0, \tau^n_{1})}\langle v_i^{\ast}(s), v^{(2)}(s)-v^{(1)}(s)\rangle\,dA(s), 
$$
and 
$$
J^{(22)}_0:=2\left(\sum_i
\int_{(0, \tau^n_{1})} v_i^{\ast}(s)\,dA(s), h(\tau^n_{1})\right)=J^{(22)}_{0k}+\bar J^{(22)}_{0k}
$$
with 
$$
J^{(22)}_{0k}:=\sum_i
\int_{(0, \tau^n_{1})}\langle v_i^{\ast}(s),w^{(2)}(s)-w^{(1)}(s)\rangle\,dA(s), 
$$
$$
\bar J^{(22)}_{0k}:=2\left(\sum_i
\int_{(0, \tau^n_{1})} v_i^{\ast}(s)\,dA(s), (I-\Pi_k)h(\tau^n_1) \right) 
$$
$$
\leq J_0^{(3)}+\left|(I-\Pi_k)h(\tau^n_{1}) \right|^2. 
$$
Thus from \eqref{Kn} we get 
\begin{equation*}
\begin{split}
K_n(t) 
 \leq &\sum_{0\leq \tau^n_{j+1} \leq t}  
|\sum_iv_i^{\ast}(\tau^n_{j+1})|^2 |\Delta A(\tau^n_{j+1})|^2 \\
&+2\sum_i\int_{(0, t)} \langle v_i^{\ast}(s), v^{(2)}_n(s)-v^{(1)}_n(s) \rangle\, dA(s)
\\
&-2 
 \sum_i\int_{(0,t)} \langle v_i^{\ast}(s), w^{(2)}_{nk}(s) - w^{(1)}_{nk}(s)\rangle\,dA(s)
 +\xi_{nk}(t)  
\end{split}	
\end{equation*}
with 
$$
\xi_{nk}(t):=\sum_{j=1}^{N(n)} 
\left|(I-\Pi_k)(h(\tau^n_{j+1}\wedge t) - h(\tau^n_j\wedge t)) \right|^2	
+ \left|(I-\Pi_k)h(\tau^n_1\wedge t)\right|^2
$$
for every $n,k\in \N$.
As $n\to \infty$ we see that 
\begin{equation*}
 \sum_{0\leq \tau^n_{j+1} \leq t}  
|\sum_iv_i^{\ast}(\tau^n_{j+1})|^2 |\Delta A(\tau^n_{j+1})|^2  \to \sum_{0<s
\leq t} |v^{\ast}(s)|^2|\Delta A(s)|^2, 	
\end{equation*}
where we use the notation $v^{\ast}(s)=\sum_iv^{\ast}_i(s)$. 
By H\"older's inequality, taking into account $r(t)\leq1$, we have  
$$
\varlimsup_{n\to\infty}\int_{(0, t)} |\langle v_i^{\ast}(s), v^{(2)}_n(s)-v^{(1)}_n(s) \rangle|\, dA(s)
$$
$$
\leq \varlimsup_{n\to\infty}
\left(\int_{(0, t)} \|v^{(2)}_n(s)-v^{(1)}_n(s)\|_{V_i}^{p_i}\, dA(s)\right)^{\tfrac{1}{p_i}}=0
$$
and similarly, 
\begin{equation*}
\varlimsup_{n\to\infty}\int_{(0,t)} |\langle v_i^{\ast}(s), w^{(2)}_{nk}(s) - w^{(1)}_{nk}(s)\rangle|\,dA(s)=0
\end{equation*}
for all integers $k\geq1$ and $i=1,2,\ldots,m$. 
Thus
\begin{equation}                                                                   \label{K}
K(t)=\varliminf_{n\to\infty}K_n(t) \leq \sum_{s\leq t} |{v}^{\ast}(s)|^2|\Delta A(s)|^2 
+ \xi_{k}(t) 	
\end{equation}
for every $k\in N$, where  
\begin{equation*}
\xi_k(t): = \varliminf_{n\to \infty} \left(\sum_{j=1}^{N(n)} 
\left|(I-\Pi_k)(h(\tau^n_{j+1}\wedge t) - h(\tau^n_j\wedge t)) \right|^2	
+ \left|(I-\Pi_k)h(\tau^n_1\wedge t)\right|^2 \right).		
\end{equation*}
Note that by Fatou's lemma and the martingale property of $h$ 
\begin{equation*}
\begin{split}
& \E \xi_k(t) \leq \E|(I-\Pi_k)h(1)|^2  \to 0
\,\,\, \textrm{as}\,\,\, k\to \infty.
\end{split}	
\end{equation*}
Note also that for each $\omega \in \Omega$ and  
$t\in (0,\infty)$ we have $\xi_k \geq \xi_{k+1}$ and $\xi_k \geq 0$. 
Thus there exists a set $\Omega''''\subset\Omega$ with 
$P(\Omega'''')=1$ such that for every $t\in [0,\infty)$ and 
$\omega\in\Omega''''$ we have $\xi_k(t) \to 0$. 
Letting here $k\to\infty$ in \eqref{K} we obtain 
\begin{equation*}
K(t)\leq \int_{(0,t]}|{v}^{\ast}(s)|^2 \Delta A(s)\,dA(s), 	
\end{equation*}
which together with \eqref{Knbelow} gives 
\begin{equation*}
K(t)= \int_{(0,t]}|v^{\ast}(s)|^2 \Delta A(s)\,dA(s) 	
\end{equation*}
for $\omega \in \tilde{\Omega}:=\Omega'''\cap\Omega''''$ and $t\in I(\omega)$. 
\end{proof}

\begin{proposition}
For $\omega\in\tilde\Omega$ 
\begin{equation}                                                       \label{formula}
\begin{split}
& |\tilde v(t)|^2 = |h(0)|^2	
+ 2\sum_{i=1}^m \int_{(0,t]} \langle v_i^{\ast}(s), v(s) \rangle\, dA(s) \\
& + 2 \int_{(0,t]} (\tilde v(s-), dh(s)) 
 - \int_{(0,t]} \left|\sum_{i=1}^m v_i^{\ast}(s)\right|^2 \Delta A(s)\,dA(s)+[h]_t
\end{split}
\end{equation}
for $t\in[0,\tau(\omega))$. 
\end{proposition}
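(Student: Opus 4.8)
The plan is to establish \eqref{formula} first for $t\in I(\omega)$, where it is an immediate consequence of Proposition~\ref{prop:4}, and then to propagate it to all $t\in[0,\tau(\omega))$ using the weak cadlag property of $\tilde v$ and the cadlag property of the right-hand side. For the first step, Proposition~\ref{prop:4} gives, for $\omega\in\tilde\Omega$ and $t\in I(\omega)$,
\[
|\tilde v(t)|^2 = 2\sum_{i=1}^m\int_{(0,t]}\langle v_i^*(s),v(s)\rangle\,dA(s) + 2\int_{(0,t]}(\tilde v(s-)-h(s-),dh(s)) + |h(t)|^2 - \int_{(0,t]}|v^*(s)|^2\Delta A(s)\,dA(s);
\]
applying to the locally square integrable cadlag martingale $h$ the It\^o formula $|h(t)|^2=|h(0)|^2+2\int_{(0,t]}(h(s-),dh(s))+[h]_t$ for the square of a norm (the case $m=1$, $v^*\equiv0$ of the statement being generalised) and cancelling $h(s-)$ in the stochastic integral yields exactly \eqref{formula} for every $t\in I(\omega)$.

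Write $G(t)$ for the right-hand side of \eqref{formula}; it is cadlag, and $G(t)=|\tilde v(t)|^2$ on $I(\omega)$. Fix $\omega\in\tilde\Omega$ (shrinking $\tilde\Omega$ by a null set if necessary) and $t\in[0,\tau(\omega))\setminus I(\omega)$. Then $\Delta A(t)=0$ by Proposition~\ref{propn:1}~(1), hence by \eqref{eq:Delta_w} $\Delta^w(\tilde v-h)(t)=0$, so $\tilde v(t)=\tilde v(t-)+\Delta h(t)$ and $\Delta G(t)=2(\tilde v(t-),\Delta h(t))+|\Delta h(t)|^2$. If $t$ is accumulated by $I(\omega)$ from the right, pick $s_n\in I(\omega)$ with $s_n\downarrow t$; then $|\tilde v(s_n)|^2=G(s_n)\to G(t)$ while $\tilde v(s_n)\weaklyto\tilde v(t)$ in $H$, so if one shows that $\tilde v(s_n)$ converges strongly its limit is $\tilde v(t)$ and $|\tilde v(t)|^2=\lim|\tilde v(s_n)|^2=G(t)$. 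If $t$ is accumulated only from the left, the same argument with $s_n\uparrow t$ gives $|\tilde v(t-)|^2=G(t-)$, and then $|\tilde v(t)|^2=|\tilde v(t-)|^2+2(\tilde v(t-),\Delta h(t))+|\Delta h(t)|^2=G(t)$. Finally, if $t\notin\overline{I(\omega)}\cup\{0\}$, put $a:=\sup\{r\in I(\omega)\cup\{0\}:r<t\}$; then $a<t$, \eqref{formula} holds at $a$ (by one of the previous cases, or trivially if $a=0$ since $\tilde v(0)=h(0)$), and $(a,t]\cap I(\omega)=\emptyset$, so by Proposition~\ref{propn:1}~(1) $A$ is constant on $[a,t]$ and hence $\tilde v-h\equiv\tilde v(a)-h(a)$ there; a direct computation then shows that over $(a,s]$ the two $dA$-integrals in $G$ do not change, while the joint increment of $2\int(\tilde v(r-),dh(r))$ and $[h]_r$ equals $|\tilde v(s)|^2-|\tilde v(a)|^2$, which propagates \eqref{formula} from $a$ to $t$.

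The main obstacle is the strong convergence of $\tilde v(s_n)$, equivalently the Cauchy property of $z^{(2)}(s_n)=\tilde v(s_n)-h(s_n)$ in $H$ as $s_n\to t$ within $I(\omega)$. Applying the increment identity \eqref{eq:towards_ito_2} to pairs $s_m<s_n$ in $I(\omega)$ and inserting the formula from the first step, the $\sum_i\int\langle v_i^*,v\rangle\,dA$ terms partly cancel and one is left with
\[
|z^{(2)}(s_n)-z^{(2)}(s_m)|^2 = 2\sum_{i=1}^m\int_{(s_m,s_n]}\langle v_i^*(r),v(s_n)-v(r)\rangle\,dA(r) + \varepsilon_{m,n},
\]
where $\varepsilon_{m,n}\to0$ as $s_m,s_n\to t$ because $[h]$, $\langle h\rangle$ and the measure $|v^*|^2\Delta A\,dA$ are cadlag and the stochastic integral of a uniformly bounded integrand over a shrinking interval is small in $L^2$, hence $\P$-a.s.\ along a subsequence. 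By H\"older's inequality the surviving integral is at most
\[
\Big(\int_{(s_m,s_n]}\eta_i^{q_i}\,dA\Big)^{1/q_i}\Big[\|v(s_n)\|_{V_i}\big(A(s_n)-A(s_m)\big)^{1/p_i}+\Big(\int_{(s_m,s_n]}\|v(r)\|_{V_i}^{p_i}\,dA(r)\Big)^{1/p_i}\Big];
\]
by assumption \eqref{assumption main} the prefactor and the second bracketed term tend to $0$, and the first is handled by choosing the sequence $(s_n)\subset I(\omega)$ so that $\|v(s_n)\|_{V_i}\,(A(s_n)-A(s_m))^{1/p_i}\to0$, which is possible because $A(s_n)\to A(t)$ and $\int_{(t,t+\delta]}\|v\|_{V_i}^{p_i}\,dA\to0$ as $\delta\downarrow0$. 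This estimate, in which the weaker hypothesis \eqref{assumption main} is used instead of the joint integrability of $\|v\|_V\|v^*\|_{V^*}$ required in \cite{gyongy:krylov:on:stochastic:II}, is the crux of the argument; granting it, the strong convergence follows, \eqref{formula} holds at every accumulation point of $I(\omega)$, and then, by the flat-interval propagation, at every $t\in[0,\tau(\omega))$.
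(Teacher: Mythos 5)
Your overall architecture --- establish \eqref{formula} for $t\in I(\omega)$, extend it to accumulation points of $I(\omega)$ by proving strong convergence of $\tilde v$ along $I(\omega)$, and propagate across intervals on which $A$ is flat --- is the same as the paper's, and the first and last steps are correct. The gap is at the crux, the Cauchy estimate. After your telescoping the surviving term is
\[
2\sum_{i=1}^m\int_{(s_m,s_n]}\langle v_i^{\ast}(r),\,v(s_n)-v(r)\rangle\,dA(r),
\]
i.e.\ $v$ evaluated at the \emph{right} endpoint of the whole interval, and your H\"older bound then contains the uncontrolled factor $\|v(s_n)\|_{V_i}\bigl(A(s_n)-A(s_m)\bigr)^{1/p_i}$. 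The proposed remedy --- choose the sequence $(s_n)\subset I(\omega)$ so that this product tends to zero --- is not available: $I(\omega)$ is a fixed countable set, built in Proposition~\ref{propn:1} from the points $\beta(r^n_j)$, and nothing in \eqref{assumption main} or in that construction guarantees that near a given $t$ it contains points at which $\|v\|_{V_i}$ is $o\bigl((A(\cdot)-A(t))^{-1/p_i}\bigr)$ simultaneously for all $i$. Local $dA$-integrability of $\|v\|_{V_i}^{p_i}$ controls $\|v(s)\|_{V_i}^{p_i}\Delta A(s)$ as $s\to t$, but not $\|v(s)\|_{V_i}^{p_i}\bigl(A(s)-A(t)\bigr)$ (take $\|v\|_{V_i}^{p_i}$ equal to $n$ on sparse sets of $dA$-measure $n^{-2}$ accumulating at $t$). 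Even granting a good pointwise selection, one still has to verify the two-index Cauchy property for arbitrary pairs from the selected sequence, where in the left-approximation case the factor $A(s_n)-A(s_m)$ is not dominated by $A(s_n)-A(t)$.

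The paper circumvents this entirely by telescoping through the identity $|\tilde v(s)-\tilde v(r)|^2=|\tilde v(s)|^2-|\tilde v(r)|^2-2(\tilde v(r),\tilde v(s)-\tilde v(r))$ with $r=t_l$ the \emph{left} endpoint, so that the surviving cross term is $\sum_i\int_{(t_l,t_n]}\langle v_i^{\ast}(s),v(s)-v(t_l)\rangle\,dA(s)$. On $(t_l,t_n]$ one has $v(t_l)=v^{(1)}_l(s)$, hence by H\"older and $r\leq 1$ this term is bounded by $\sum_iQ_i\bigl(\int_{(0,\infty)}\|v-v^{(1)}_l\|_{V_i}^{p_i}\,dA\bigr)^{1/p_i}$, which tends to zero by Proposition~\ref{prop:4}. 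This is precisely where the approximation properties built into $I(\omega)$ via Lemma~\ref{lemma:2} are consumed, and it is the ingredient your argument is missing; without it the strong convergence of $\tilde v(s_n)$, and hence the extension of \eqref{formula} beyond $I(\omega)$, is not established. (A minor further point: the paper only ever approximates from the left, treating $t\in\bar I^l(\omega)\setminus I(\omega)$ via $\Delta A(t)=0$ and the flat intervals otherwise; your extra right-approximation case is unnecessary and only multiplies the places where the unproved Cauchy estimate is invoked.)
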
 
\begin{proof}
Let $\omega \in \tilde{\Omega}$ be fixed and let $t\in[0,\tau(\omega))$. 
To ease notation we use $n\to\infty$ in place of the subsequence 
$n'\to\infty$ defined in the previous proposition. 
If $t\in I(\omega)$, then by virtue of the previous proposition 
taking $n\to \infty$ in~\eqref{eq:towards_ito_5} 
we obtain
\begin{align*}
|\tilde v(t)|^2 
= &|h(t)|^2
+ 2\sum_{i=1}^m \int_{(0,t]} \langle v_i^{\ast}(s), v(s) \rangle\, dA(s) 
+ 2 \int_{(0,t]} (\tilde v(s-)-h(-), dh(s))  \\
& - \int_{(0,t]} \left|\sum_{i=1}^m v_i^{\ast}(s)\right|^2 \Delta A(s)\,dA(s). 
\end{align*}
Hence using the It\^o formula for Hilbert space valued processes
\begin{equation}                                          \label{eq:hilbert_ito_fla}
|h(t)|^2 = |h(0)|^2 + 2 \int_{(0,t]} (h(s-),dh(s)) + [h]_t, 	
\end{equation}
we get \eqref{formula} for $t\in I(\omega)$. 
If $t\in \bar{I}^l(\omega)\setminus I(\omega)$, then 
for sufficiently large $n$ there is $j=j(n)$ such that 
$t_n:=\tau^{n}_j(\omega)\in I(\omega)$ and $t_n\nearrow t$ 
for $n\to \infty$. 
Using the algebraic relationship
\begin{equation}                                        \label{algebra}
|\tilde v(s)-\tilde v(r)|^2
=|\tilde v(s)|^2-|\tilde v(r)|^2-2\left(\tilde v(r),\tilde v(s)-\tilde v(r)\right), 
\end{equation}
with $s:=t_n$, $r:=t_l$, 
and since~\eqref{formula} holds for every $t\in I(\omega)$, 
we get 
\begin{equation*}
\begin{split}
& |\tilde v(t_n) - \tilde v(t_l)|^2 
= 2\sum_{i=1}^m \int_{(t_l,t_n]} \langle v_i^{\ast}(s),v(s) \rangle\, dA(s) 
 + 2 \int_{(t_l,t_n]} (\tilde v(s-), dh(s)) \\
& - \int_{(t_l,t_n]} 
\left|
\sum_{i=1}^m v_i^{\ast}(s)
\right|^2 
\Delta A(s)\, dA(s) + [h]_{t_n} - [h]_{t_l} 
- 2(\tilde v(t_l),\tilde v(t_n)-\tilde v(t_l)) 
\end{split}
\end{equation*}
for $n>l$. Moreover
\begin{equation*}
\begin{split}
& 2(\tilde v(t_l),\tilde v(t_n)-\tilde v(t_l)) \\
& = 2\sum_{i=1}^m \int_{(t_l,t_n]} \langle v_i^{\ast}(s),v(t_l) \rangle dA(s) 
+ 2(\tilde v(t_l),h(t_n)-h(t_l)). 		
\end{split}
\end{equation*}
Hence by~\eqref{eq:hilbert_ito_fla} 
\begin{equation*}
\begin{split}
|\tilde v(t_n) - \tilde v(t_l)|^2 
=  & 2\sum_{i=1}^m \int_{(t_l,t_n]} \langle v_i^{\ast}(s),v(s) - v(t_l)\rangle dA(s)\\ 
& + 2 \int_{(t_l,t_n]} (\tilde v(s-)-h(s-)-(\tilde v(t_l)-h(t_l)), dh(s)) \\
& - \int_{(t_l,t_n]} \left|\sum_{i=1}^m v_i^{\ast}(s)\right|^2 \Delta A(s) dA(s) 
+ |h(t_n) - h(t_l)|^2\\
& =: 2I^1_{ln} + 2I^2_{ln} - I^3_{ln} +I^4_{ln}.
\end{split}
\end{equation*}
Since $h$ is  cadlag we have 
\begin{equation*}
\lim_{l\to \infty} \sup_{n>l} I^4_{ln} 
= \lim_{l\to \infty} \sup_{n>l} 	|h(t_n) - h(t_l)|^2 = 0.
\end{equation*}
By the previous proposition we get
\begin{equation*}
\begin{split}
& \lim_{l\to \infty} \sup_{n>l} |I^2_{ln}| 
= \lim_{l\to \infty} \sup_{n>l} 
\left|\int_{(t_l,t_n]} (\tilde v(s-)-h(s-)-(\tilde v_l(s)-h_l(s)), dh(s))\right|\\
& \leq 2\lim_{l\to \infty} \sup_{t\in (0,\infty)}
\left|\int_{(0,t]} (\tilde v(s-)-h(s-)-(\tilde v_l(s)-h_l(s)), dh(s))\right| = 0, 
\end{split}	
\end{equation*}
and 
\begin{equation*}
\begin{split}
& \lim_{l\to \infty} \sup_{n>l} |I^1_{ln}| 
\leq \lim_{l\to \infty}
\sum_{i=1}^m \int_{(0,\infty)} \|v_i^{\ast}(s)\|_{V_i^*}\|v(s)-v^{(1)}_l(s)\|_{V_i}\,dA(s)
= 0,  
\end{split}	
\end{equation*}
via $r(t)\leq1$  and 
H\"older's inequality. 
Thus
\begin{equation*}
\lim_{l\to \infty} \sup_{n>l} |\tilde v(t_n)-\tilde v(t_l)|^2 = 0, 	
\end{equation*}
and so the sequence $(\tilde v(t_n))_{n\in \N}$ converges strongly to some $\xi$ in $H$.
Moreover since $\tilde v$ is weakly cadlag and $t_n \nearrow t$, we conclude
that $\xi=\tilde v(t-)$.
Hence using  \eqref{formula} with $t_n$ in place of $t$, 
and letting $n\to\infty$ we obtain 
\begin{equation*}                                                    
\begin{split}
& |\tilde v(t-)|^2 = |h(0)|^2	
+ 2\sum_{i=1}^m \int_{(0,t)} \langle v_i^{\ast}(s), v(s) \rangle\, dA(s) \\
& + 2 \int_{(0,t)} (\tilde v(s-), dh(s)) 
 - \int_{(0,t)} \left|\sum_{i=1}^m v_i^{\ast}(s)\right|^2 \Delta A(s)\,dA(s)+[h]_{t-}
\end{split}
\end{equation*}
for $t\in I(\omega)\setminus\bar I^{l}(\omega)$, and so for this $t$ we get also 
\eqref{formula} by taking into account that $\Delta A(t)=0$. 
If $t \in (0,\tau(\omega)) \setminus \bar{I}^l(\omega)$, then there is 
$t'\in \{0\} \cup \bar{I}^l(\omega)$ 
such that $t'<t$ and $(t',t] \cap I(\omega) = \emptyset$. Thus 
$dA(s) = 0$ for $s\in (t',t]$, and so $\tilde v(s)-\tilde v(t') = h(s)-h(t')$.
Hence applying~\eqref{formula} with $t:=t'$, 
and the formula
$$
|\tilde v(t)|^2-|\tilde v(t')|^2=2(\tilde v(t'),\tilde v(t)-\tilde v(t'))+|\tilde v(t)-\tilde v(t')|^2
$$
together with the It\^o formula for Hilbert space valued martingales, 
\begin{equation*}
|h(t) - h(t')|^2 = 2\int_{(t',t]} (h(s-)-h(t'),dh(s)	) + [h]_t - [h]_{t'}, 
\end{equation*}
we obtain \eqref{formula} for the $t$ under consideration. 
\end{proof}
Now we can finish the proof of Theorem~\ref{thm:1} 
by noting that by the above proposition $|\tilde v(t)|^2$ is a cadlag process, and 
since  by Proposition~\ref{propn:2} the process $\tilde v$ is $H$-valued and weakly cadlag, 
it follows by identity \eqref{algebra}
that $\tilde v$ is an $H$-valued cadlag process.

\paragraph{\bf Acknowledgements}
The authors are sincerely grateful to the anonymous referees. 
Their corrections and valuable suggestions helped improve the presentation of the paper. 

\paragraph{\bf Open Access} This article is distributed under the terms of the Creative Commons Attribution 4.0 International License (\url{http://creativecommons.org/licenses/by/4.0/}), which permits unrestricted use, distribution, and reproduction in any medium, provided you give appropriate credit to the original author(s) and the source, provide a link to the Creative Commons license, and indicate if changes were made.


\begin{thebibliography}{10}


\bibitem{dareiotis:gyongy}
K. Dareiotis and I. Gy\"ongy. 
A Comparison Principle for Stochastic Integro-Differential Equations. 
{\em Potential Anal.} 41:4, 1203--1222, 2014.

\bibitem{Evans}
L. C. Evans. {\em Partial Differential Equations}. AMS, Providence, 
Rhode Island, 1998.


\bibitem{ggz}
H.~Gajewski, K.~Gr{\"o}ger and K.~Zacharias.
{\em Nichtlineare {O}peratorgleichungen und
  {O}peratordifferentialgleichungen}.
Akademie-Verlag, Berlin, 1974.

\bibitem{daprato:jentzen:rockner}
G. Da Prato, A. Jentzen and M. R\"ockner.
A mild It\^o formula for SPDEs. {\em arXiv:math}. 2016.

\bibitem{gyongy:krylov:on:stochastic:I}
I.~Gy\"ongy and N.~V. Krylov.
On Stochastic Equations with Respect to Semimartingales I.
{ \em Stochastics}, 4(1):1--21, 1980.

\bibitem{gyongy:krylov:on:stochastic:II}
I.~Gy\"ongy and N.~V. Krylov.
\newblock{On Stochastic Equations with Respect to Semimartingales II. It\^o Formula in Banach Spaces.}
\newblock{ \em Stochastics}, 6(3):153--173, 1982.


\bibitem{gyongy}I. Gy\"ongy. 
On Stochastic Equations with Respect to Semimartingales III.
{ \em Stochastics}, 7(4):231--254, 1982.


\bibitem{GSS}I. Gy\"ongy, S. Sabanis and D. \v{S}i\v{s}ka.
Convergence of tamed Euler schemes for a class of stochastic evolution equations. {\em Stoch. Partial Differ. Equ. Anal. Comput.}, 4(2):225--245, 2016. 

\bibitem{K} 
N.V. Krylov. 
Maximum principle for SPDEs and its applications
in {\em Stochastic differential equations: theory and applications}, 
Interdisciplinary Mathematical Sciences, 2. World Scientific Publishing Co. Pte. Ltd., Hackensack, NJ, 2007. 

\bibitem{K2010} 
N.V. Krylov.
It\^o's formula for the $L_p$-norm 
of stochastic $W_ p^1$-valued processes.
{\em Probab. Theory Relat. Fields}, 147:583--605, 2010.

\bibitem{krylov:ito_fla}
N.~V. Krylov. 
A relatively short proof of It\^o's formula for SPDEs
and its applications.
{ \em Stoch PDE: Anal Comp}, 1:152--174, 2013. 

\bibitem{krylov:rozovskii:stochastic}
N.~V. Krylov and B.~L. Rozovskii.
Stochastic evolution equations.
{\em J. Sov. Math.}, 16:1233--1277, 1981.

\bibitem{Lions}
J. L. Lions. {\em Quelques m\'ethodes de r\'esolution des probl\`emes 
aux limites non lin\'eaires.}
Dunod Gauthier-Villars, Paris, 1969.

\bibitem{pardoux:thesis}
E. Pardoux.  \'Equations aux d\'erive\'es partielles stochastiques 
non lin\'earies monotones. \'Etude des
solution forte de type Ito. These Universit\'e de Paris Sud, Orsay, 1975.

\bibitem{PR} C. Pr\'ev\^ot and M. R\"ockner. 
{\em A concise course on stochastic partial differential equations}.  Lecture Notes in Mathematics, 1905. Springer, Berlin, 2007. 

\bibitem{R}
B.L. Rozovskii. {\em Stochastic Evolution Systems. Linear Theory and Applications to Nonlinear Filtering.} Kluwer, Dordrecht, 1990. 

\end{thebibliography}
\end{document}